\newcommand{\bc}{\begin{center}}
\newcommand{\ec}{\end{center}}
\newcommand{\be}{\begin{enumerate}}
\newcommand{\ee}{\end{enumerate}}
\newcommand{\beq}{\begin{equation}}
\newcommand{\eeq}{\end{equation}}
\newcommand{\bi}{\begin{itemize}}
\newcommand{\ei}{\end{itemize}}
\newcommand{\bd}{\begin{description}}
\newcommand{\ed}{\end{description}}
\newcommand{\ba}{\begin{array}}
\newcommand{\bea}{\begin{eqnarray*}}
\newcommand{\eea}{\end{eqnarray*}}
\newcommand{\ea}{\end{array}}
\newcommand{\bt}{\begin{tabular}}
\newcommand{\et}{\end{tabular}}
\newcommand{\mb}{\mbox}
\newcommand{\bmi}{\begin{minipage}}
\newcommand{\emi}{\end{minipage}}
\newcommand{\lb}{\linebreak}
\newtheorem{stel}{Theorem}[section]
\newtheorem{deflem}[stel]{Definition-Lemma}
\newtheorem{lemm}[stel]{Lemma}
\newtheorem{prop}[stel]{Proposition}
\newtheorem{exam}[stel]{Example}
\newtheorem{rem}[stel]{Remark}
\newtheorem{corollary}[stel]{Corollary}
\newtheorem{theo}[stel]{Theorem}
\newtheorem{defin}[stel]{Definition}
\newcommand{\myitem}[1]{%
\item[#1]\protected@edef\@currentlabel{#1}%
}
\newcommand{\Matrix}[1]
    {\begin{pmatrix}
      \Matrix@r #1;\@bye;\Matrix@r
     \end{pmatrix}}
\def\Matrix@r #1;{\@bye #1\Matrix@z\@bye\Matrix@s #1,\@bye, }%
\def\Matrix@s #1,{#1\Matrix@t }%
\def\Matrix@t #1,{\@bye #1\Matrix@y\@bye\@firstofone {&#1}\Matrix@t}%
\def\Matrix@y #1\Matrix@t{\\ \Matrix@r }%
\def\Matrix@z #1\Matrix@r {}
\def\@bye  #1\@bye   {}
\title{\bfseries A note on the transport of (near-)field structures}
\author{Leandro Boonzaaier and Sophie Marques}
\begin{document}
\maketitle
\bc

\rm e-mail: leandro.boonzaaier@rain.co.za

\it\small
rain (Pty) Ltd, 
Cape Quarter,
Somerset Road, 
Green Point, 8005,\lb
South Africa\\

\rm e-mail: smarques@sun.ac.za

\it
Department of Mathematical Sciences, 
University of Stellenbosch, 
Stellenbosch, 7600,\lb
South Africa\\
\&
NITheCS (National Institute for Theoretical and Computational Sciences), 
South Africa \\ \bigskip

\ec

{\bf Abstract:} This paper addresses the question: given a scalar group, can we determine all the additions that transform this scalar group into a (near-)field? A key approach to addressing this problem involves transporting (near-)field structures via multiplicative automorphisms. We compute the set of continuous multiplicative automorphisms of the real and complex fields and analyze their structures. Additionally, we characterize the endo-bijections on the scalar group that define these additions.\\

{\bf Key words:} Multiplicative automorphisms, field structures, groups, real field, finite fields, complex field, algebraic topology, semi-direct product, continuous automorphisms. \\

{\it 2020 Mathematics Subject Classification:} 12-02; 17-02; 55-02; 20-02

\tableofcontents

\section*{Acknowledgement}
We express our sincere gratitude to Dr. Ben Blum-Smith for his invaluable contributions in providing us with the essential insights and arguments necessary for constructing the automorphisms of $\mathbb{R}$ and $\mathbb{C}$.

\section*{Introduction}\label{intro}
\noindent The term ``transport of structure" describes the process by which an object acquires a structure by being ``isomorphic" to another object that already possesses that structure (see \cite{Bourbaki}). This concept has been studied in various fields, including abstract algebra \cite{Arnal, Holm, Lusztig, Rentschler, Yaraneri};  calculus \cite{Buchmann}; algebraic geometry and number theory \cite{Bruyn, Hambleton}, \cite{Manasa}; and category theory \cite{Bennett, Diers}. This paper examines the transport of (near)-field structures within the framework of a fixed multiplicative monoid, motivated by connections to the theory of near-vector spaces.

\noindent  J.~André introduced the concept of a near-vector space in \cite{Andre} as a generalization of a vector space, where the distributive law holds only on one side. This allows for some non-linearity while still retaining many tools of linear algebra. These structures have been explored algebraically, geometrically, and categorically in recent literature (see, e.g., \cite{DeBruyn, Howell, Howell2, HM22, HR22, HS18, MMJ}). In this paper, a system \((F, \cdot, 1, -1, 0)\) is called a scalar group if:  
\begin{itemize}
\item \((F, \cdot, 1)\) forms a monoid;  
\item \((F \setminus \{0\}, \cdot, 1)\) forms a group;  
\item for all \(\alpha \in F\), we have \(0 \cdot \alpha = \alpha \cdot 0 = 0\);  
\item \(\pm 1\) are solutions to \(x^2 - 1 = 0\).  
\end{itemize} 
A near-vector space on a scalar group \((F, \cdot, 1, -1, 0)\) defines different additions that turn \(F\) into a near-field. Specifically, each non-zero element \(u\) of the quasi-kernel induces an addition \(+_u\) such that \((F, +_u, \cdot)\) forms a near-field (see \cite{Andre, DeBruyn, MMJ}).  

\noindent In this context, it is natural to pose the following question:  

\begin{center}  
  {\sf Given a scalar group, how can we determine all the additions that transform this scalar group into a near-field?}  
\end{center}  

\noindent Studying this question reveals new structures, transport of structures, and relationships that enrich the fields of algebra while paving the way for further applications and generalizations. A complete answer for a given scalar group would enable us to construct all near-vector spaces over that scalar group, combining  \cite[Theorem 2.4-17]{DeBruyn} and \cite[Theorem 2.5-2]{DeBruyn}. Since near-vector spaces generalize vector spaces while preserving many of their structural and algebraic properties, addressing this question would significantly broaden the scope of applications for near-linear algebra.

\noindent The implications of this question go beyond the theory of near-vector spaces. Classifying additions that transform a scalar group into a near-field enhances our understanding of the algebraic diversity and properties of near-fields, and more broadly, near-ring structures. These have been extensively studied in foundational works such as \cite{ Clay, Ferrero, Pilz, Pilz2, Wähling, Zassenhaus}.  The problem also has strong connections to number theory. For instance, the paper concludes by showing that if an addition \(\boxplus\) on \(\mathbb{Q}\) exists such that \((\mathbb{Q}, \boxplus, \cdot)\) forms a field isomorphic to another field \((K, +, \cdot)\), then \((\mathbb{Q}, +, \cdot)\) and \((K, +, \cdot)\) share fundamental arithmetic properties. Resolving this question would thus reveal significant number-theoretic relationships between such fields. Furthermore, it is shown that there is a bijection between two near-field structures sharing the same multiplication if and only if this bijection is a multiplicative automorphism of \((F, \cdot)\) (see Proposition \ref{bij+}).

\noindent A significant portion of this paper involves computing these multiplicative automorphisms for different fields: finite fields, the real field, and the complex field (see \(\S3\)). The real and finite fields are relatively straightforward to compute, while the multiplicative automorphisms of the complex field exhibit a rich structure and an intriguing representation in the usual complex plane (see Appendix). Other authors have studied various types of automorphisms of the complex field and their properties (see \cite{conradauto, Kest, Salz, Sound, Yale}).  

\noindent The first section of this paper introduces the concept of transport of structure, a natural method for inducing new additions on a scalar group. In the second section, we compute and analyze the structure of the set of multiplicative automorphisms for specific fields. The key result in this section is the computation and characterization of the multiplicative automorphisms of the complex field (see Theorem \ref{autoC} and Proposition \ref{groupaut}). Lastly, the final section explores the definition of additions on a scalar group that are not derived from the transport of structure from a given field. These additions are characterized as endo-bijections on \((F, \cdot)\) satisfying specific axioms (see Definition \ref{nfam} and Definition-Lemma \ref{nfa}).

\newpage
\tableofcontents

\newpage

\section{Transport of ring structures}\label{induced_fields}

In the context of near-ring theory, we introduce the following definitions to clarify the underlying group structures and the different automorphisms associated with a near-ring. 

\begin{defin}
Let \((R, +, \cdot)\) be a near-ring.
\begin{enumerate}
\item We say that \((R, +)\) is the \textsf{underlying additive group of the near-ring \((R, +, \cdot)\)}.
\item We say that \(\phi\) is an \textsf{additive automorphism of \(R\)} if \(\phi\) is an automorphism of the additive group \((R, +)\).
\item We say that \((R, \cdot)\) is the \textsf{underlying multiplicative monoid of the near-ring \((R, +, \cdot)\)}.
\item We say that \(\phi\) is a \textsf{multiplicative automorphism of \(R\)} if \(\phi\) is an automorphism of the monoid \((R, \cdot)\).
\end{enumerate}
\end{defin}
For any field \((K,+ , \cdot)\), it is worth noting that we can extend an automorphism \(\phi\) of \((K^*, \cdot) \) to an automorphism of \((K, \cdot)\) simply by mapping \(0\) to \(0\). However, when \(\phi\) is continuous, the continuity of \(\phi\) is not automatically extended to \(0\).

Given a set $F$, we say that a bijection from $F$ to $F$ is an {\sf endo-bijection on $F$}.
Consider two monoids with zero elements \((M_1, \cdot_1, 1_1, 0_1)\) and \((M_2, \cdot_2, 1_2, 0_2)\), and let \(\phi\) be a multiplicative morphism from \((M_1, \cdot_1, 1_1, 0_1)\) to \((M_2, \cdot_2, 1_2, 0_2)\). We have the following properties:

\begin{itemize}
\item If \(0_2 \in \operatorname{Range} (\phi)\), then \(\phi(0_1) = 0_2\).
\item For any invertible element \(\alpha \in M_1\), it holds that \(\phi(\alpha^{-1}) = \phi(\alpha)^{-1}\).
\end{itemize}

In particular, when \((F_1, +_1, \cdot_1)\) and \((F_2, +_2, \cdot_2)\) are fields, any multiplicative bijection \(\varphi\) from \((F_1, +_1, \cdot_1)\) to \((F_2, +_2, \cdot_2)\) maps a primitive \(n\)th root of unity in \(F_1\) to a primitive \(n\)th root of unity in \(F_2\). Specifically, for \(F_1 = F_2 = \mathbb{C}\), \(\phi(i) \in \{ \pm i \}\).

 We now recall the concept of transport of structure via a bijection and outline some fundamental properties. The proof is omitted as it is straightforward and can be readily verified.

\begin{deflem}\label{indfield}
Let \((R, +, \cdot)\) be a near-ring, \(S\) be a set, and \(\phi\) be a bijective map from \(S\) to \(R\).
\begin{enumerate}
\item We define \textsf{the near-ring transported by \(\phi\)}, denoted by \((S, +_\phi, \cdot_\phi)\) or simply \(S_\phi\), as the near-ring whose transported multiplication \(\cdot_\phi\) is given by:
\[
a \cdot_\phi b = \phi^{-1} (\phi(a) \cdot \phi(b))
\]
and transported addition \(+_\phi\) is given by:
\[
a +_\phi b = \phi^{-1} (\phi(a) + \phi(b))
\]
for any \(a, b \in S\). For any \(a \in S\), we denote 
\begin{itemize} 
\item \(\phi^{-1}(a)\) by \(a_\phi\). Note that \(0_\phi\) is the identity element with respect to \(+_\phi\),
\item \(\phi^{-1}(-\phi(a))\) by \(-_\phi a\). Note that \(-_\phi a\) is the additive inverse of \(a\) in \(S\) with respect to \(+_\phi\).
\end{itemize}
When \((R, +, \cdot)\) is a ring, near-field, field, or division ring, so is \((S, +_\phi, \cdot_\phi)\). Moreover, \(\phi\) is a near-ring isomorphism between \(S_\phi\) and \(R\).
\item When \(S=R\) and \(\phi\) is a multiplicative automorphism of \(R\), we obtain \(\cdot_\phi = \cdot\). Also, \(0\) is the identity element with respect to \(+_\phi\), and \(-a\) is the additive inverse of \(a\) in \(R\) with respect to \(+_\phi\) for any \(a \in R\). In other words, we keep the multiplication but change only the addition to create a new ring structure on the ring.
\item When \(S=R\) and \(\phi\) is an additive automorphism of \(R\), we obtain \(+_\phi = +\). In other words, we keep the addition but change only the multiplication to create a new ring structure on the ring.
\end{enumerate}
\end{deflem}

Here is a simple example to illustrate the above.
\begin{exam}
Let \((\mathbb{Q}(\alpha), + , \cdot)\) be a number field where \(\alpha\) is a primitive element over \(\mathbb{Q}\) of degree \(n\). There exists a bijection \(\phi\) from any number field \(\mathbb{Q}(\alpha)\) to \(\mathbb{Q}\). For instance, we can consider a bijection from \(\mathbb{Q}\) to \(\mathbb{N}\) and a bijection from \(\mathbb{N}^n\) to \(\mathbb{N}\) recursively from a Cantor pairing from \(\mathbb{N}^2\) to \(\mathbb{N}\) in order to construct a bijection \(\phi\) from \(\mathbb{Q}(\alpha)\) to \(\mathbb{Q}\). Then we endow \(\mathbb{Q}\) with the operations \(x +_\phi y= \phi^{-1} (\phi(x) + \phi(y))\) and \(x \cdot_\phi y =\phi^{-1} (\phi(x) \cdot \phi(y))\) where \(x, y \in \mathbb{Q}\). Thus, \((\mathbb{Q}, +_\phi, \cdot_\phi)\) is isomorphic to \((\mathbb{Q}(\alpha), + , \cdot)\).
\end{exam}

We now prove that if any near-field structure transported by a endo-bijection that preserve the multiplication, this endo-bijection is a multiplicative automorphism. 

 \begin{prop} \label{bij+}
  Let $(F, + , \cdot)$ be a near-field of characteristic $p$ and $\phi$ be an endo-bijection on $F$. The following assertions are equivalent: 
\begin{enumerate} 
\item $(F, +_\phi, \cdot)$ is a near-field;
\item $\alpha (\beta +_\phi \gamma) =   \alpha \beta +_\phi \alpha \gamma
$, for any $\alpha, \beta, \gamma \in F$. 
\item $\phi$ is a multiplicative automorphism of $F$. 
\end{enumerate}
 \end{prop} 
 \begin{proof} The equivalence \((1) \Leftrightarrow (2)\) is clear. To establish the equivalence \((2) \Leftrightarrow (3)\), applying \((2)\) to \(\alpha = 0\), we obtain \(0 = 0 +_\phi 0\). That is, \(\phi(0) = \phi(0) + \phi(0)\). From this, we deduce \(\phi(0) = 0\).

Now, let \(\alpha \in F^*\) and \(\beta \in F\). Since we have \(\alpha (\alpha^{-1} +_\phi \beta) = 1 +_\phi \alpha \beta\), we obtain:
\[ \phi(\alpha\beta) = \phi(\alpha) \phi(\beta) + \phi(\alpha) \phi(\alpha^{-1}) - 1. \]
Applying this equality to \(\beta = 0\), we obtain:
\[ 0 = \phi(\alpha) \phi(\alpha^{-1}) - 1. \]
Thus, we get:
\[ \phi(\alpha\beta) = \phi(\alpha) \phi(\beta). \]

Therefore, we deduce the implication \((2) \Rightarrow (3)\). The converse is not hard to established.
\end{proof}

We introduce a summation notation for the transported sum.

\begin{defin}
Let \((R, +, \cdot)\) be a near-ring, \(S\) be a set, and \(\phi\) be a bijective map from \(S\) to \(R\). 
We denote
\[{}^\phi \! \sum_{i=1}^n a_i = a_1 +_\phi a_2 +_\phi \cdots +_\phi a_{n-1} +_\phi a_n \]
and
\[{}^\phi \! \prod_{i=1}^n a_i = a_1 \cdot_\phi a_2 \cdot_\phi \dots \cdot_\phi a_{n-1} \cdot_\phi a_n,\]
where \(n \in \mathbb{N}\) and \(a_i \in S\) for all \(i \in \{1, \ldots, n\}\).

In particular, we have
\[{}^\phi \sum_{i=1}^n a = n_\phi \cdot a \quad \text{and} \quad {}^\phi \! \prod_{i=1}^n a = \phi^{-1}(\phi(a)^n).\]
\end{defin}

We note that for any \(n \in \mathbb{N}\) such that \((n, \operatorname{Char}(K))=1\) and \(a \in S\), \(n_\phi^{-1} \cdot a\) is the element of \(S\) such that \({}^\phi \sum_{i=1}^n n_\phi^{-1} \cdot a = a\).

The following lemma precisely establishes the conditions under which two transported structures become equal.

\begin{lemm} \label{equal}
Let \((R, +, \cdot)\) be a near-ring, and let \(\phi\) and \(\psi\) be bijective maps from \(S\) to \(R\). Then \(S_\phi = S_\psi\) if and only if \(\psi \circ \phi^{-1}\) is a near-ring automorphism of \((R, +, \cdot)\).
\end{lemm}

\begin{proof}
The equality \(S_\phi = S_\psi\) means that for all \(a, b \in S\), we have \(a +_\phi b = a +_\psi b\) and \(a \cdot_\phi b = a \cdot_\psi b\). This is equivalent to saying that \(\phi^{-1}(\phi(a) + \phi(b)) = \psi^{-1}(\psi(a) + \psi(b))\) and \(\phi^{-1}(\phi(a) \cdot \phi(b)) = \psi^{-1}(\psi(a) \cdot \psi(b))\) for all \(a, b \in S\).

Since \(\phi\) is a bijection, for all \(a, b \in S\), there exist \(a', b' \in R\) such that \(a = \phi^{-1}(a')\) and \(b = \phi^{-1}(b')\). Substituting these into the previous equalities, we get:
\[
a' + b' = (\psi \circ \phi^{-1})^{-1}(\psi(\phi^{-1}(a')) + \psi(\phi^{-1}(b')))
\]
and
\[
a' \cdot b' = (\psi \circ \phi^{-1})^{-1}(\psi(\phi^{-1}(a')) \cdot \psi(\phi^{-1}(b'))).
\]

This shows that \(\psi \circ \phi^{-1}\) preserves both addition and multiplication, thus proving that \(\psi \circ \phi^{-1}\) is a near-ring automorphism of \((R, +, \cdot)\).
\end{proof}

Building upon the preceding lemma, we can readily derive the following corollary.

\begin{corollary} \label{cor1}
Let \((R, +, \cdot)\) be a near-ring. We have
\[
\mathcal{R}(R) \simeq \operatorname{Bij}(R) / \operatorname{Aut}(R, +, \cdot), \quad \mathcal{R}_{\operatorname{m}}(R) \simeq \operatorname{Aut}(R, \cdot) / \operatorname{Aut}(R, +, \cdot),
\]
and
\[
\mathcal{R}_{\operatorname{a}}(R) \simeq \operatorname{Aut}(R, +) / \operatorname{Aut}(R, +, \cdot),
\]
where \(\mathcal{R}(R)\) (resp. \(\mathcal{R}_{\operatorname{m}}(R)\), resp. \(\mathcal{R}_{\operatorname{a}}(R)\)) is the set of rings transported by endo-bijections (resp. multiplicative, resp. additive automorphisms) of \(R\).
\end{corollary}

\section{The multiplicative automorphisms of certain fields}
In this section, our aim is to compute the set of multiplicative automorphisms and study the structure of these sets for specific fields, with a focus on the continuous automorphisms of the field of complex numbers. 

\subsection{Review of some basic results}
We start with the straightforward case of a finite field. Given a prime number \(p\) and \(n \in \mathbb{N}\), for a finite field \(K\) of order \(p^n\), it is well known that the multiplicative group of such a field is cyclic of order \(p^n-1\) (as referenced in \cite[Theorem 1.1]{conradcyclotomic} and \cite[Proposition 4.3]{topicsinfieldtheory}). We establish that the automorphisms of \((K, \cdot)\) are isomorphic to the unit group of \(\mathbb{Z}/(p^n-1)\mathbb{Z}\). Simply put, for any \(\psi \in \operatorname{Aut}(K, \cdot)\), there exists \(\alpha \in \mathbb{N}\) such that \(\psi(x) = x^\alpha\) for all \(x \in K\), where \(\alpha\) is invertible modulo \(p^n-1\). Additionally, we recall that \(\operatorname{Aut}(K, +, \cdot) = \langle F_p \rangle\), where \(F_p: K \rightarrow K\) represents the Frobenius automorphism sending \(x\) to \(x^p\), and \(\langle F_p \rangle\) denotes the group generated by \(F_p\) in \((U_{p^n-1}, \odot)\). Therefore, from Corollary \ref{cor1}, we deduce that:
\[
\mathcal{M}(K)_{\operatorname{aut}} \simeq U_{p^n-1}/ \langle [p] \rangle
\]
where \(\mathcal{M}(K)_{\operatorname{aut}}\) is the set of all fields induced by multiplicative automorphisms of \(K\).

To compute the automorphisms of the real or complex numbers, we note that for any field \(K\), the set of automorphisms of the additive group \((K, +)\) is the set of automorphisms of \(K\) when viewed as a vector space over its prime subfield. Indeed, an automorphism of \((K, +)\) is also an automorphism of \(K\) considered as a \(\mathbb{Z}\)-module. Moreover, this automorphism naturally becomes a \(\mathbb{Z}/p\mathbb{Z}\)-vector space morphism when the characteristic of \(K\) is \(p\), and a \(\mathbb{Q}\)-vector space morphism when the characteristic of \(K\) is \(0\). If we further assume that these additive automorphisms are continuous, we can utilize the fact that \(\mathbb{Q}\) is a dense subset of \(\mathbb{R}\) to obtain that the set of the continuous additive automorphism of $R$ is given by:
\[
\operatorname{Aut}_{\operatorname{cont}}(\mathbb{R}, +) = \operatorname{Aut}_{\mathbb{R}}(\mathbb{R}) = \{\psi_\alpha \mid \alpha \in \mathbb{R}^*, \psi_\alpha(x) = \alpha x, \forall x \in \mathbb{R} \} \simeq (\mathbb{R}^*, \cdot).
\]
We note that \(\psi_\alpha^{-1} = \psi_{\frac{1}{\alpha}}\). Therefore, the ring structures transported by the additive morphisms of \(\mathbb{R}\) are \((\mathbb{R}, +, \cdot_\alpha)\) where \(\alpha \in \mathbb{R}^*\) and \(\cdot_\alpha\) is given by \(x \cdot_\alpha y = \alpha \cdot x \cdot y\), for all \(x, y \in \mathbb{R}\).

Moreover, using the fact that \(\{1, i\}\) forms a basis for \(\mathbb{C}\) as an \(\mathbb{R}\)-vector space, we have:
\[
\operatorname{Aut}_{\operatorname{cont}}(\mathbb{C}, +) = \operatorname{Aut}_{\mathbb{R}}(\mathbb{C}) = \{\psi_{\alpha, \beta} \mid \alpha \in \mathbb{C}, \beta \in \mathbb{C} \setminus \alpha \mathbb{R} \} \simeq (GL(2, \mathbb{R}), \cdot)
\]
where \(\psi_{\alpha, \beta}\) is the morphism sending \(x + iy\) to \(\alpha x + \beta y\), for all \(x, y \in \mathbb{R}\) with \(\alpha \in \mathbb{C}\) and \(\beta \in \mathbb{C} \setminus \alpha \mathbb{R}\). Writing \(\alpha = a_1 + i a_2\) and \(\beta = b_1 + i b_2\), we denote by \({\sf det}(\alpha, \beta) = a_1 b_2 - a_2 b_1\), we note that by assumption, \({\sf det}(\alpha, \beta)\neq 0\) and $$\psi_{\alpha, \beta}^{-1} = \psi_{{\sf det}(\alpha, \beta)^{-1} (b_2 - a_2 i), {\sf det}(\alpha, \beta)^{-1}(-b_1 + i a_1)}.$$ Therefore, the ring structures transported by the additive morphisms of \(\mathbb{C}\) are \((\mathbb{C}, +, \cdot_{\alpha, \beta})\) where \(\alpha \in \mathbb{C}\) and \(\beta \in \mathbb{C} \setminus \alpha \mathbb{R}\) and \(\cdot_{\alpha, \beta}\) is given by 
$$(x + iy) \cdot_{\alpha, \beta} (x' + iy') = \psi_{\alpha, \beta}^{-1}((\alpha x + \beta y)(\alpha x' + \beta y')),$$ for all \(x + iy, x' + iy' \in \mathbb{C}\). 

The action of $\psi_{\alpha, \beta}$ on the complex plane is a transformation of the coordinate system as illustrated in the figure below.
\begin{multicols}{2}
\begin{center}
\includegraphics[scale=0.4]{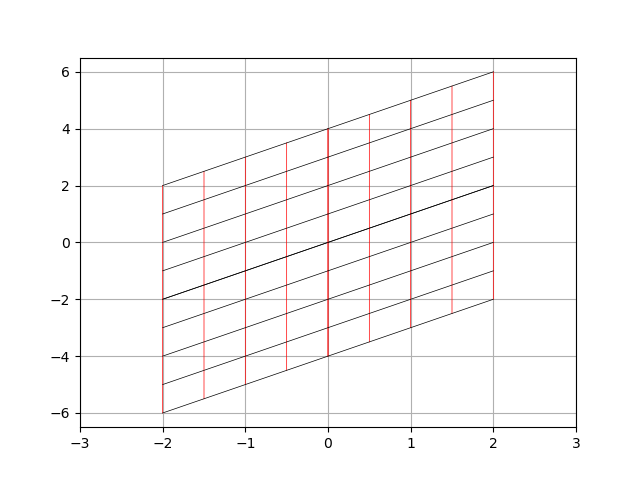} 
{$\alpha=1+i$ and $\beta=2i$}
\end{center}
\columnbreak
\begin{center}
\includegraphics[scale=0.4]{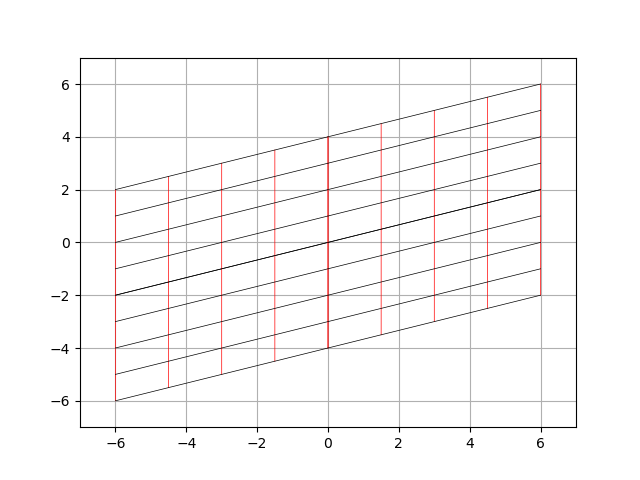}
{$\alpha=3+i$ and $\beta=2i$} 
\end{center}
\end{multicols}

A direct consequence of the above results is the well-established fact that automorphisms of the field of real numbers are trivial, whereas in the complex field, they are either the complex conjugation or the identity morphism.

We now briefly recall how one can compute the set of multiplicative automorphisms of the real field before turning our full attention to those of the complex field. Utilizing the characterization of the automorphisms of \((\mathbb{R}, +)\), we can straightforwardly derive the multiplicative automorphisms of \(\mathbb{R}_{>0}\) by simply using the exponential map onto \(\mathbb{R}_{>0}\). More precisely, the continuous automorphisms of \((\mathbb{R}_{>0}, \cdot)\) are precisely the maps \(\phi_\alpha: \mathbb{R}_{>0} \rightarrow \mathbb{R}_{>0}\) sending \(x\) to \(x^\alpha\), for some \(\alpha \in \mathbb{R}^*\).

There exists a direct correspondence between the multiplicative automorphisms of \(\mathbb{R}\) and the automorphisms of \((\mathbb{R}_{>0}, \cdot)\), achieved through the restriction and corestriction of the multiplicative automorphisms of \(\mathbb{R}\) onto \(\mathbb{R}_{>0}\). This follows from the fact that every positive real number is a square, and squares are mapped to squares by multiplicative automorphisms. Additionally, for any \(x > 0\), it follows that \(\psi(-x) = -\psi(x)\), and \(\psi(0) = 0\) for any multiplicative automorphism \(\psi\) of \(\mathbb{R}\).

More precisely, the multiplicative automorphisms of \(\mathbb{R}\) are described as maps of the following form:
\[
\psi(x) = 
\begin{cases} 
0 & \text{if } x=0\\
\eta(x) & \text{if } x > 0 \\
-\eta(-x) & \text{if } x < 0
\end{cases}
\]
where \(\eta\) is an automorphism of \((\mathbb{R}_{>0}, \cdot)\).

Any automorphism \(\eta\) of \((\mathbb{R}_{>0}, \cdot)\) is of the form \(\exp \circ \chi \circ \ln\), where \(\chi\) is an automorphism of \((\mathbb{R}, +)\). Since \(\chi\) is an automorphism of \(\mathbb{R}\) as a \(\mathbb{Q}\)-vector space, by basic linear algebra, it is uniquely defined by choosing two Hamel bases of \(\mathbb{R}\) over \(\mathbb{Q}\), say \(\{ b_i \}_{i \in I}\) (resp. \(\{ c_i \}_{i \in I}\)), and setting \(\chi(b_i) = c_i\) for all \(i \in I\). (Here, by Hamel basis for \(\mathbb{R}\) over \(\mathbb{Q}\), we mean a basis of \(\mathbb{R}\) as a vector space over \(\mathbb{Q}\), see also \cite[p. 69]{Golan}).

Finally, the multiplicative automorphisms of $\mathbb{R}$ continuous over \(\mathbb{R}^*\) (resp. \(\mathbb{R}\)) are precisely given by 
\[
\epsilon_\alpha(x) = 
\begin{cases}
x^\alpha & \text{if } x \geq 0 \ \\
-(-x)^\alpha & \text{if } x < 0
\end{cases}
\]
for \(\alpha \in \mathbb{R}^*\) (resp. \(\alpha \in \mathbb{R}^*_{>0}\)). 
Moreover, the inverse of \(\epsilon_\alpha\), denoted as \(\epsilon_\alpha^{-1}\), equals \(\epsilon_{1/\alpha}\) and \(\epsilon_\alpha\) is a homeomorphism. We note that a multiplicative automorphism \(\epsilon_\alpha\) of \(\mathbb{R}\) induces an automorphism of \(\mathbb{Q}\) if and only if \(\alpha \in  \{\pm 1\}\).

Using the notations as above, we summarize the result established with the following group isomorphisms:
\[
(\mathbb{R}^*, \cdot) \simeq^{\Psi_1} (\operatorname{Aut}_{\operatorname{cont}} (\mathbb{R}^*, \cdot), \circ) \simeq^{\Psi_2} (\operatorname{Aut}_{\operatorname{cont}} (\mathbb{R}_{>0}, \cdot), \circ) \simeq^{\Psi_3} (\operatorname{Aut}_{\operatorname{cont}} (\mathbb{R}, +), \circ)
\]
where \(\Psi_1 (\alpha) = \epsilon_\alpha\), \(\Psi_2 (\epsilon_\alpha) = \phi_\alpha\), and \(\Psi_3 (\phi_\alpha) = \psi_\alpha\), for any \(\alpha \in \mathbb{R}^*\). 
Moreover, $\Psi_1$ induces the bijection, 
\[
(\mathbb{R}_{>0}, \cdot) \simeq^{\Psi_1} (\operatorname{Aut}_{\operatorname{cont}} (\mathbb{R}, \cdot), \circ). \]
We observe that these isomorphisms allow us to transfer the topology of \(\mathbb{R}^*\) to the subsequent groups of automorphisms.

We end this section by observing that we can also easily describe the multiplicative automorphisms of the rings \(\mathbb{Z}\) and \(\mathbb{Q}\) as follows:
\begin{enumerate}
\item For \(\mathbb{Z}\):
Multiplicative automorphisms are precisely obtained by multiplicatively extending a bijection between the set of prime natural numbers and a set of the form  
\(\{\eta_p p \mid p \text{ is a prime natural number}\}\), where \(\eta_p \in \{\pm 1\}\), for all \(p\) prime natural number and mapping \(0\) to \(0\) and \(\pm 1\) to \(\pm 1\).

\item For \(\mathbb{Q}\):
Multiplicative automorphisms are precisely obtained by multiplicatively extending a bijection between the set of prime natural numbers and a set of the form 
\(\{\eta_p p^{\nu_p} \mid p \text{ is a prime natural number}\}\), where \(\eta_p, \nu_p \in \{\pm 1\}\) for all \(p\) prime natural number, while mapping \(0\) to \(0\) and \(\pm 1\) to \(\pm 1\).
\end{enumerate}

We conclude this section with an example that demonstrates that \(\mathbb{Q}\) can be endowed with an addition, \(+_\phi\), such that \((\mathbb{Q}, +_\phi, \cdot)\) is not isomorphic to \((\mathbb{Q}, +, \cdot)\). Let \(K\) be a number field such that its ring of integers is a PID with a group of units \(\{\pm 1\}\). For instance, considering the field \(K = \mathbb{Q}(\sqrt{-19})\) with ring of integer \(\mathcal{O}_K = \mathbb{Z}\left[\frac{1+\sqrt{-19}}{2}\right]\). Since \(\mathcal{O}_K\) is a principal ideal domain, \(\mathcal{O}_K\) is multiplicatively generated by its irreducible elements (those are also prime elements). Since above any prime number in \(\mathbb{Z}\), there is a finite number of prime elements of \(\mathcal{O}_K\), the cardinality of the irreducible elements of \(\mathcal{O}_K\) is the same as \(\mathbb{N}\). We can therefore create a multiplicative bijection \(\varphi\) from \(K\) to \(\mathbb{Q}\), extending by multiplicativity a bijection from a complete set of irreducible elements of \(\mathbb{Z}\left[\frac{1+\sqrt{-19}}{2}\right]\), distinct up to units, into a complete set of prime numbers of \(\mathbb{Z}\), distinct up to unit, sending \(\pm 1\) to \(\pm 1\). Then \((\mathbb{Q}, +_\phi, \cdot) \simeq (\mathbb{Q}(\sqrt{-19}), +, \cdot)\). We note that we cannot do such a construction with \(\mathbb{Q}(i)\) since the group of units has order 4. Indeed, a primitive 4th root of unity must map to a primitive 4th root of unity via a multiplicative isomorphism, but there is no primitive 4th root of unity in \(\mathbb{Q}\).
\subsection{Multiplicative automorphisms of $\mathbb{C}$}
\subsubsection{From $\operatorname{Aut}_{\operatorname{cont}} (\mathbb{C}, + )^{{2\pi i \mathbb{Z}} } $ to $\operatorname{Aut}_{\operatorname{cont}} ( \mathbb{C},\cdot)$}
The lemma presented below allows us to characterize the continuous multiplicative automorphisms of $\mathbb{C}^*$ in relation to the continuous automorphisms of $(\mathbb{C}, +)$. The proof uses algebraic topology to obtain the result wanted. 
\begin{lemm} \label{gpautoC}
We have the following group isomorphisms:
 $$  (\operatorname{Aut}_{\operatorname{cont}} (\mathbb{C}, + )^{{2\pi i \mathbb{Z}} }, \circ ) \simeq^{\Psi} \left( \operatorname{Aut}_{\operatorname{cont}}\left( \frac{\mathbb{C}}{2\pi i \mathbb{Z}}  , + \right), \circ \right) \simeq^{\Theta} (\operatorname{Aut}_{\operatorname{cont}} ( \mathbb{C}^*,\cdot),\circ )   $$
where
\begin{itemize}
\item $\operatorname{Aut}_{\operatorname{cont}} (\mathbb{C} , +)^{{2\pi i \mathbb{Z}} }$ is the group of continuous automorphisms of $(\mathbb{C}, +)$ that fix $2\pi i \mathbb{Z}$ set-wise.
\item $\operatorname{p} : \mathbb{C} \rightarrow \frac{\mathbb{C}}{2\pi i \mathbb{Z}}$ is the quotient map.
\item $\widetilde{\operatorname{exp}} : \frac{\mathbb{C}}{2\pi i \mathbb{Z}} \rightarrow \mathbb{C}^*$ is the canonical isomorphism induced by the complex exponential $\operatorname{exp} : \mathbb{C} \rightarrow \mathbb{C}^*$, via the first isomorphism theorem.
\item $L$ is the branch of $\operatorname{Log}$ defined by $L(z) = \operatorname{ln}(r) + i \operatorname{arg}(s)$, with $\operatorname{ln}$ denoting the natural logarithm on $\mathbb{R}_{> 0}$ and $\operatorname{arg}$ is the map that sends a complex number to its principal argument. 
\item $\Psi : \operatorname{Aut}_{\operatorname{cont}} (\mathbb{C} , + )^{{2\pi i \mathbb{Z}} } \rightarrow \operatorname{Aut}_{\operatorname{cont}}\left(\frac{\mathbb{C}}{2\pi i \mathbb{Z}}, +\right)$ sends $\phi$ to the automorphism $\widetilde{\phi} $ of $\left(\frac{\mathbb{C}}{2\pi i \mathbb{Z}}, +\right)$ uniquely determined by the equality $\widetilde{\phi} \circ \operatorname{p} = \operatorname{p} \circ \phi.$
\item $\Theta : \operatorname{Aut}_{\operatorname{cont}}(\frac{\mathbb{C}}{2\pi i \mathbb{Z}}, +) \rightarrow \operatorname{Aut}_{\operatorname{cont}}(\mathbb{C}^*,\cdot)$ sends $\phi$ to $\widetilde{\operatorname{exp}} \circ \phi \circ \widetilde{\operatorname{exp}}^{-1}$.
 \end{itemize} 
In particular,  $\Theta \circ \Psi : \operatorname{Aut}_{\operatorname{cont}} (\mathbb{C} , +)^{{2\pi i \mathbb{Z}} } \rightarrow (\operatorname{Aut}_{\operatorname{cont}} ( \mathbb{C}^*,\cdot),\circ ) $ sends $\phi$  to $\operatorname{exp} \circ \, \phi  \circ \operatorname{L}$. 
\end{lemm}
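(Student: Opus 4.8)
The statement packages two separate isomorphisms, $\Psi$ and $\Theta$, together with an identification of the composite, so the plan is to treat them in turn and then chase the closed formula for $\Theta\circ\Psi$. The overall strategy is that everything downstairs lives on the cylinder $\mathbb{C}/2\pi i\mathbb{Z}$, which is simultaneously a quotient group (giving the soft descent argument for $\Psi$) and, via $\widetilde{\exp}$, a topological-group incarnation of $\mathbb{C}^*$ (giving $\Theta$ for free by conjugation).

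For $\Psi$, the forward direction is formal. If $\phi$ is a continuous additive automorphism fixing $2\pi i\mathbb{Z}$ set-wise, then $\phi$ carries $\ker(\operatorname{p})=2\pi i\mathbb{Z}$ into itself, so $\operatorname{p}\circ\phi$ annihilates $2\pi i\mathbb{Z}$ and the universal property of the quotient yields a unique additive $\widetilde{\phi}$ with $\widetilde{\phi}\circ\operatorname{p}=\operatorname{p}\circ\phi$; its continuity is automatic since $\operatorname{p}$ is a quotient map. Running the same construction on $\phi^{-1}$ and invoking uniqueness shows $\widetilde{\phi}$ is invertible, and that $\Psi$ is a homomorphism. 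Injectivity is the standard connectedness trick: if $\widetilde{\phi}=\operatorname{Id}$ then $z\mapsto\phi(z)-z$ is a continuous map from the connected space $\mathbb{C}$ into the discrete set $2\pi i\mathbb{Z}$, hence constant, and its value at $0$ is $0$, so $\phi=\operatorname{Id}$.

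The substance of $\Psi$ is surjectivity, and this is where algebraic topology enters; I expect it to be the main obstacle. The map $\operatorname{p}:\mathbb{C}\to\mathbb{C}/2\pi i\mathbb{Z}$ is the universal covering of the cylinder, and $\mathbb{C}$ is simply connected, so for $\bar\phi\in\operatorname{Aut}_{\operatorname{cont}}(\mathbb{C}/2\pi i\mathbb{Z},+)$ the lifting criterion produces a continuous $\phi:\mathbb{C}\to\mathbb{C}$ with $\operatorname{p}\circ\phi=\bar\phi\circ\operatorname{p}$, normalized by $\phi(0)=0$ (possible since $\bar\phi$ fixes the identity coset). I would then promote this mere lift to an additive automorphism: the continuous map $(z,w)\mapsto\phi(z+w)-\phi(z)-\phi(w)$ lands in $2\pi i\mathbb{Z}$ after applying $\operatorname{p}$ because $\bar\phi$ is additive, hence is constant on the connected space $\mathbb{C}^2$ with value $0$ at the origin, so $\phi$ is additive; lifting $\bar\phi^{-1}$ and using uniqueness of basepoint-preserving lifts yields a two-sided inverse; and $\operatorname{p}(\phi(2\pi i n))=\bar\phi(0)=0$ shows $\phi$ preserves $2\pi i\mathbb{Z}$ set-wise. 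Thus $\phi\in\operatorname{Aut}_{\operatorname{cont}}(\mathbb{C},+)^{2\pi i\mathbb{Z}}$ maps to $\bar\phi$.

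For $\Theta$, the canonical map $\widetilde{\exp}:\mathbb{C}/2\pi i\mathbb{Z}\to\mathbb{C}^*$ is an isomorphism of topological groups, so conjugation $\phi\mapsto\widetilde{\exp}\circ\phi\circ\widetilde{\exp}^{-1}$ is a group isomorphism onto $\operatorname{Aut}_{\operatorname{cont}}(\mathbb{C}^*,\cdot)$; composing with the isomorphism $\operatorname{Aut}(\mathbb{C}^*,\cdot)\simeq\operatorname{Aut}(\mathbb{C},\cdot)$ of Definition \ref{indauto} (extension by $\widehat{\phantom{x}}(0)=0$) produces $\Theta$. Finally I would verify the closed formula by a direct diagram chase: since $\widetilde{\exp}\circ\operatorname{p}=\exp$ and $L$ is a section of $\exp$, so that $\widetilde{\exp}^{-1}=\operatorname{p}\circ L$ on $\mathbb{C}^*$, one computes $\widetilde{\exp}\circ\widetilde{\phi}\circ\widetilde{\exp}^{-1}=\exp\circ\,\phi\circ L$ on $\mathbb{C}^*$ using $\widetilde{\phi}\circ\operatorname{p}=\operatorname{p}\circ\phi$, whence $\Theta\circ\Psi(\phi)$ is exactly the multiplicative automorphism induced by $\exp\circ\,\phi\circ L$. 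The one point deserving genuine care is the behavior of the extension by $0$ at the origin, which should be tracked carefully alongside the $\mathbb{C}^*$-statements rather than taken for granted, since continuity there is a real constraint on the modulus of the induced map.
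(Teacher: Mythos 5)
Your proof is correct, and its skeleton is the same as the paper's: $\Psi$ comes from the universal property of the quotient, surjectivity of $\Psi$ from basepoint-preserving lifts through the universal cover $\operatorname{p}:\mathbb{C}\to\mathbb{C}/2\pi i\mathbb{Z}$ (exactly the paper's appeal to Hatcher's lifting and unique-lifting propositions), $\Theta$ from conjugation by $\widetilde{\operatorname{exp}}$ combined with Definition \ref{indauto}, and the closed formula from the identities $\widetilde{\operatorname{exp}}^{-1}=\operatorname{p}\circ L$ on $\mathbb{C}^*$ and $\widetilde{\phi}\circ\operatorname{p}=\operatorname{p}\circ\phi$, which is the paper's diagram chase verbatim. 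Where you genuinely diverge is in the two steps that make the lifting argument work. The paper upgrades the lift to a group map by introducing a functor $\operatorname{Lift}$ on pointed connected manifolds, asserting that it preserves categorical products and therefore carries group objects and group homomorphisms to the same, so $\operatorname{Lift}(\chi)$ is automatically a continuous automorphism; and it gets injectivity of $\Psi$ from uniqueness of basepoint-preserving lifts. You replace both by the elementary rigidity argument: the defect $(z,w)\mapsto\phi(z+w)-\phi(z)-\phi(w)$ is a continuous map from the connected space $\mathbb{C}^2$ into the discrete kernel $2\pi i\mathbb{Z}$ vanishing at the origin, hence identically zero, and likewise $\phi(z)-z$ is constant when $\Psi(\phi)=\operatorname{Id}$. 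Your route is more self-contained (the paper leaves the product-preservation of $\operatorname{Lift}$ as an unproved assertion), while the paper's functorial argument is the more conceptual one and would apply unchanged to any universal cover of a topological group object. One last remark: your closing caveat about continuity at $0$ of the extension by zero is well observed but should be read against the paper's conventions rather than left hanging. If continuity were demanded on all of $\mathbb{C}$, the maps $\epsilon_\alpha$ with $\operatorname{Re}(\alpha)<0$, whose modulus $r^{\operatorname{Re}(\alpha)}$ blows up at the origin, would be excluded from the target of $\Theta$ and Theorem \ref{autoC} would fail as stated; throughout the paper (compare Theorem \ref{autoRs}, where every $\epsilon_\alpha$, $\alpha\in\mathbb{R}^*$, is called a homeomorphism) continuity effectively means continuity of the restriction to the group of units, and under that reading both your argument and the paper's are complete.
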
 
\begin{proof}
Let us consider the surjective homomorphism $\operatorname{exp}: \mathbb{C} \rightarrow \mathbb{C}^*$ with the kernel $2\pi i \mathbb{Z}$. Via the first isomorphism theorem, $\operatorname{exp}$ induces an isomorphism $\widetilde{\operatorname{exp}}: \frac{\mathbb{C}}{2\pi i \mathbb{Z}} \rightarrow \mathbb{C}^*$. From this, we can deduce easily that $\Theta$ is a bijection from $\operatorname{Aut}_{\operatorname{cont}}(\frac{\mathbb{C}}{2\pi i \mathbb{Z}}, +)$ to $\operatorname{Aut}_{\operatorname{cont}}(\mathbb{C}^*, \cdot)$. Next, we prove that $\operatorname{Aut}_{\operatorname{cont}}(\frac{\mathbb{C}}{2\pi i \mathbb{Z}}, +) \simeq \operatorname{Aut}_{\operatorname{cont}}(\mathbb{C}, +)^{{2\pi i \mathbb{Z}}}$. We will begin by proving that any continuous automorphism $\phi$ of $\mathbb{C}$ that fixes $2\pi i \mathbb{Z}$ as a set will induce a continuous automorphism of the quotient group $\frac{\mathbb{C}}{2\pi i \mathbb{Z}}$.

 Since $\operatorname{p}$ is a continuous surjective morphism, the composition $\operatorname{p} \circ \phi: \mathbb{C} \rightarrow \frac{\mathbb{C}}{2\pi i \mathbb{Z}}$ is a surjective continuous homomorphism. By the first isomorphic theorem, this map induces a unique continuous automorphism $\widetilde{\phi}: \frac{\mathbb{C}}{2\pi i \mathbb{Z}} \rightarrow \frac{\mathbb{C}}{2\pi i \mathbb{Z}}$, such that $\widetilde{\phi} \circ \operatorname{p} = \operatorname{p} \circ \phi$, due to the fact that $\operatorname{p} \circ \phi$ has $2\pi i \mathbb{Z}$ as its kernel.

Therefore, we define a homomorphism 
$$\begin{array}{cccc}  \Psi : & \operatorname{Aut}_{\operatorname{cont}} (\mathbb{C}, + )^{{2\pi i \mathbb{Z}} }&  \rightarrow &\operatorname{Aut}_{\operatorname{cont}}\left( \frac{\mathbb{C}}{2\pi i \mathbb{Z}} , + \right) \\ & \phi & \mapsto & \widetilde{\phi} \end{array} .$$ 
We will now prove that $\Psi$ is a bijection. To do this, we will employ the concepts of algebraic topology. Let $\chi: \frac{\mathbb{C}}{2\pi i \mathbb{Z}} \rightarrow \frac{\mathbb{C}}{2\pi i \mathbb{Z}}$ be a continuous automorphism. Since the map $\operatorname{p}: \mathbb{C} \rightarrow \frac{\mathbb{C}}{2\pi i \mathbb{Z}}$ induces a universal cover of $\frac{\mathbb{C}}{2\pi i \mathbb{Z}}$ and $\pi_1(\mathbb{C}, 0)$ is trivial, we can apply \cite[Propositions 1.33 and 1.34]{Hatcher} to conclude that there exists a unique continuous map $\operatorname{Lift}(\chi): \mathbb{C} \rightarrow \mathbb{C}$ that sends $0$ to $0$ and satisfies $\operatorname{p} \circ \operatorname{Lift}(\chi) = \chi \circ \operatorname{p}$. To demonstrate that $\chi$ is a homomorphism, we can introduce the functor $\operatorname{Lift}$ from the category of pointed connected manifolds to the category of pointed connected manifolds. This functor on the objects assigns a pointed connected manifold to a pointed universal cover and on the morphism assigns a based continuous map between pointed manifolds to the unique continuous based lift between pointed universal covers. It can be proven that this functor $\operatorname{Lift}$ preserves the Cartesian product, which corresponds to the categorical product in the category of pointed manifolds. By observing that a functor preserving categorical products maps group objects to group objects and group morphisms to group morphisms, we can infer that $\operatorname{Lift}$ maps a group to a group and a group homomorphism to a group homomorphism. Therefore, since $\chi$ is a continuous automorphism, it follows that $\operatorname{Lift}(\chi)$ is also a continuous automorphism. The existence of $\operatorname{Lift}(\chi)$ implies the surjectivity of $\Psi$, and its uniqueness implies the injectivity of $\Psi$.

\noindent Finally, let $\phi \in \operatorname{Aut}_{\operatorname{cont}} (\mathbb{C}, +)^{{2\pi i \mathbb{Z}}}$. We have 
$$\begin{array}{lll} \Theta \circ \Psi (\phi)|_{\mathbb{C}^*} &=& \Theta(\widetilde{\phi})|_{\mathbb{C}^*}= \widetilde{\operatorname{exp}} \circ \widetilde{\phi} \circ \widetilde{\operatorname{exp}}^{-1} \\
&=& \widetilde{\operatorname{exp}} \circ \widetilde{\phi} \circ \operatorname{p} \circ \operatorname{L} = \widetilde{\operatorname{exp}} \circ \operatorname{p} \circ \phi \circ \operatorname{L} = \operatorname{exp} \circ \phi \circ \operatorname{L}.\end{array}$$
\end{proof} 
\subsubsection{Computing $\operatorname{Aut}_{\operatorname{cont}} (\mathbb{C}, + )^{{2\pi i \mathbb{Z}} } $}
Let us now compute the group of automorphisms of $\mathbb{C}$ that fix $2i\pi\mathbb{Z}$ set-wise.
\begin{lemm} \label{fixautoC}
We have 
$$\operatorname{Aut}_{\operatorname{cont}} (\mathbb{C}, + )^{{2\pi i \mathbb{Z}} } =\{ \phi_{\eta, \alpha} :   \eta \in \{ \pm 1\} , \!  \alpha \in \mathbb{C} \setminus i\mathbb{R} \} $$
where $\phi_{\eta, \alpha}$ is a map from $\mathbb{C}$ to $\mathbb{C}$ such that $\phi_{\eta, \alpha}(ai + b) = \eta ai +\alpha b$, for all $a, b \in \mathbb{R}$, where  $\eta \in \{ \pm 1\}$ and $\alpha \in \mathbb{C} \setminus i\mathbb{R}$. 

\end{lemm}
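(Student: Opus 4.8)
The plan is to deduce the answer from the complete description of continuous additive automorphisms already established in Corollary \ref{autoaddR}, and then impose the single additional requirement of stabilizing $2\pi i\mathbb{Z}$ set-wise. By Corollary \ref{autoaddR}, every $\phi \in \operatorname{Aut}_{\operatorname{cont}}(\mathbb{C},+)$ is of the form $\phi(x+iy) = \alpha x + \beta y$ with $\alpha = \phi(1) \in \mathbb{C}$ and $\beta = \phi(i)$, where the bijectivity of $\phi$ is encoded by $\{\alpha,\beta\}$ being $\mathbb{R}$-linearly independent (equivalently $\beta \in \mathbb{C}\setminus\alpha\mathbb{R}$ together with $\alpha\neq 0$). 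So the whole task reduces to reading off which parameters $(\alpha,\beta)$ produce a map that fixes the lattice $2\pi i\mathbb{Z}$ as a set.

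First I would compute the image of the lattice. A generator $2\pi i n$ has zero real part and imaginary part $2\pi n$, so $\phi(2\pi i n) = 2\pi n\,\beta$, whence $\phi(2\pi i\mathbb{Z}) = 2\pi\beta\mathbb{Z}$. The set-wise fixing condition $\phi(2\pi i\mathbb{Z}) = 2\pi i\mathbb{Z}$ is therefore equivalent to the equality of infinite cyclic subgroups $\beta\mathbb{Z} = i\mathbb{Z}$ inside $(\mathbb{C},+)$. This is the crux of the argument, although it is short: the equality forces $\beta$ to be a generator of $i\mathbb{Z}$. Concretely, $i \in \beta\mathbb{Z}$ gives $i = k\beta$ and $\beta \in i\mathbb{Z}$ gives $\beta = mi$ for integers $k,m$; combining yields $km = 1$, so $m = \pm 1$ and hence $\beta = \pm i$. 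Writing $\beta = \eta i$ with $\eta \in \{\pm 1\}$ records exactly the two admissible choices, and conversely both choices manifestly stabilize the lattice.

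Finally I would translate the bijectivity constraint into the statement's parametrization. With $\beta = \eta i \neq 0$, the requirement that $\alpha$ and $\eta i$ be $\mathbb{R}$-linearly independent is precisely $\alpha \notin i\mathbb{R}$, i.e. $\alpha \in \mathbb{C}\setminus i\mathbb{R}$ (which also automatically excludes $\alpha = 0$). Matching coordinates — here $b$ is the coefficient of $1$ and $a$ the coefficient of $i$, so that $\phi(ai+b) = \alpha b + \eta i a = \eta a i + \alpha b = \phi_{\eta,\alpha}(ai+b)$ — recovers exactly the claimed family. The main obstacle is conceptual rather than computational: one must recognize that fixing $2\pi i\mathbb{Z}$ set-wise constrains only the generator $\beta = \phi(i)$, and that this constraint reduces to the purely arithmetic fact that $\pm i$ are the only generators of $i\mathbb{Z}$; the parameter $\alpha = \phi(1)$ and its nondegeneracy condition are then inherited unchanged from Corollary \ref{autoaddR}.
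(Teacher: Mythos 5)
Your proof is correct and takes essentially the same route as the paper's: both reduce to the classification of continuous additive automorphisms in Corollary \ref{autoaddR}, so that $\phi$ is $\mathbb{R}$-linear and determined by $\phi(1)=\alpha$ and $\phi(i)=\beta$, and then impose the lattice condition to force $\beta\in\{\pm i\}$ with $\alpha\in\mathbb{C}\setminus i\mathbb{R}$ ensuring bijectivity. The only difference is that you spell out the step the paper merely asserts (that set-wise preservation of $2\pi i\mathbb{Z}$ is equivalent to $\phi(i)\in\{\pm i\}$) via the observation that $\beta\mathbb{Z}=i\mathbb{Z}$ forces $\beta$ to be a generator of $i\mathbb{Z}$, which is a welcome clarification rather than a departure.
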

\begin{proof}

Let us consider an automorphism $\phi$ of $\mathbb{C}$ that fixes $2\pi i\mathbb{Z}$ set-wise. By $\S 3.1$, since $\phi$ is a continuous group automorphism, we can conclude that $\phi$ is $\mathbb{R}$-linear. Consequently, $\phi$ preserves $2\pi i\mathbb{Z}$ if and only if $\phi(i) \in\{ \pm i \}$.

Now, let us view $(\mathbb{C},+)$ as an $\mathbb{R}$-vector space with the basis $\{1, i\}$. This implies that the map $\phi$ is fully determined by the choice of $\phi(i)$ (either $+i$ or $-i$) and the value of $\phi(1)$, denoted by $\alpha$. For $\phi$ to be an automorphism, it is necessary that the set $\{\alpha, \phi(i)\}$ forms a basis for $\mathbb{C}$ over $\mathbb{R}$. Hence, we require $\alpha$ to belong to $\mathbb{C}\backslash i\mathbb{R}$. The converse is clear. 

\end{proof} 

\subsubsection{Computing the multiplicative automorphisms of $\mathbb{C}$}
Finally, let us deduce the form of the multiplicative continuous automorphisms of $\mathbb{C}$. From Lemma \ref{fixautoC} and Lemma \ref{gpautoC}, we obtain:
\begin{theo}\label{autoC}
The multiplicative automorphisms of $\mathbb{C}$ continuous over $\mathbb{C}^*$ (resp. $\mathbb{C})$ can be expressed precisely in one of two forms:
$$\begin{array}{cccl}  \epsilon_\alpha : & \mathbb{C} & \rightarrow & \mathbb{C} \\ 
& z=rs & \mapsto & r^\alpha s,
\end{array} 
\text{ \ }
\begin{array}{cccl}  \overline{\epsilon_\alpha} : & \mathbb{C} & \rightarrow & \mathbb{C} \\ 
& z=rs & \mapsto &r^\alpha\overline{s}
\end{array} $$
where $r \in \mathbb{R}_{>0}$, $s \in \mathbb{S}$ (the unit circle), and $\alpha \in \mathbb{C} \setminus i\mathbb{R}$ (resp. $\alpha \in \mathbb{C} \setminus i\mathbb{R}$ such that ${\sf Re} (\alpha) >0$).

Moreover, the following properties hold: for all $\alpha, \beta \in \mathbb{C} \setminus i\mathbb{R}$,
\begin{itemize}
\item $\overline{\epsilon_1}$ corresponds to complex multiplication.
\item $\overline{\epsilon_\alpha} = \epsilon_\alpha \circ \overline{\epsilon_1}$ and $\epsilon_\alpha = \overline{\epsilon_\alpha} \circ \overline{\epsilon_1}$.
\item $\overline{\epsilon_1} \circ \epsilon_\alpha = \overline{\epsilon_{\overline{\alpha}}}$ and $\overline{\epsilon_1} \circ \overline{\epsilon_\alpha} = \epsilon_{\overline{\alpha}}$.
\item $\epsilon_\alpha \circ \epsilon_\beta= \epsilon_{\operatorname{Re}(\beta) \alpha + i\operatorname{Im}(\beta)}$.
\item $\overline{\epsilon_\alpha} \circ \overline{\epsilon_\beta}=  \epsilon_\alpha \circ  \epsilon_{\overline{\beta}}= \epsilon_{ \operatorname{Re}(\beta) \alpha - i\operatorname{Im}(\beta)}$
\item $\overline{\epsilon_\alpha} \circ {\epsilon_\beta}= \overline{\epsilon_{ \operatorname{Re}(\beta) \alpha - i\operatorname{Im}(\beta)}} $
\item ${\epsilon_\alpha} \circ \overline{\epsilon_\beta}=\overline{\epsilon_{ \operatorname{Re}(\beta) \alpha + i\operatorname{Im}(\beta)}}$
\item The inverse of $\epsilon_\alpha$ (resp. $\overline{\epsilon_\alpha}$) is $\epsilon_{\frac{1 - i\operatorname{Im}(\alpha)}{\operatorname{Re}(\alpha)}}$ (resp. $\overline{\epsilon_{\frac{1 + i\operatorname{Im}(\alpha)}{\operatorname{Re}(\alpha)}}}$).
\item $\epsilon_\alpha$ is a homeomorphism over $\mathbb{C}^*$ (resp. $\mathbb{C}$).
\end{itemize}
\end{theo}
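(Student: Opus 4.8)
The plan is to leverage the machinery already assembled in Lemma \ref{gpautoC} and Lemma \ref{fixautoC}. By Lemma \ref{gpautoC}, every continuous multiplicative automorphism of $\mathbb{C}$ is $\Theta \circ \Psi(\phi)$ for a unique $\phi \in \operatorname{Aut}_{\operatorname{cont}}(\mathbb{C},+)^{2\pi i \mathbb{Z}}$, and this composite is the multiplicative automorphism induced by $\operatorname{exp} \circ \phi \circ L$. By Lemma \ref{fixautoC}, $\phi = \phi_{\eta,\alpha}$ with $\eta \in \{\pm 1\}$ and $\alpha \in \mathbb{C} \setminus i\mathbb{R}$. So the first task is purely computational: write $z = rs$ with $r = |z| \in \mathbb{R}_{>0}$ and $s \in \mathbb{S}$, so that $L(z) = \ln(r) + i\arg(s)$; apply $\phi_{\eta,\alpha}$, which by definition sends $ai + b$ to $\eta a i + \alpha b$; then exponentiate. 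With $b = \ln(r)$ and $a = \arg(s)$, one gets $\operatorname{exp}(\alpha \ln(r) + \eta i \arg(s)) = r^\alpha \cdot \operatorname{exp}(\eta i \arg(s))$. The factor $\operatorname{exp}(i \arg(s)) = s$ recovers $s$ when $\eta = +1$ (giving $\epsilon_\alpha$) and $\operatorname{exp}(-i\arg(s)) = \bar{s}$ when $\eta = -1$ (giving $\overline{\epsilon_\alpha}$). This establishes the two forms.

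Next I would verify the list of algebraic identities. The cleanest route is to record once and for all the action in polar form: $\epsilon_\alpha(rs) = r^\alpha s$ and $\overline{\epsilon_\alpha}(rs) = r^\alpha \bar{s}$, and to note that conjugation acts on the modulus-one factor only. The statement $\overline{\epsilon_1}$ is complex conjugation is immediate since $r^1 \bar{s} = r\bar{s} = \overline{rs}$. The relations $\overline{\epsilon_\alpha} = \epsilon_\alpha \circ \overline{\epsilon_1}$ and its companions follow by tracking what each side does to $rs$. For the composition laws, the only subtlety is that $\epsilon_\alpha$ does not act linearly on the exponent in $\alpha$: composing $\epsilon_\alpha \circ \epsilon_\beta$ sends $rs \mapsto r^\beta s \mapsto (r^\beta)^\alpha \cdot s = r^{\alpha\beta} s$ on the radial part, but one must be careful that the unit-circle factor $s$ is itself unchanged by $\epsilon$ while the radial exponents multiply — and crucially that $r^{\alpha\beta}$ must be re-expressed so the resulting map is again of the form $\epsilon_\gamma$. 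Here lies the main obstacle: $\alpha\beta \in \mathbb{C}$ need not lie in $\mathbb{C} \setminus i\mathbb{R}$, and more importantly the exponent $\gamma$ governing the radial scaling must be disentangled from the rotation that a complex exponent induces. Writing $\alpha = \operatorname{Re}(\alpha) + i\operatorname{Im}(\alpha)$, the map $r \mapsto r^\alpha = r^{\operatorname{Re}(\alpha)} \cdot r^{i\operatorname{Im}(\alpha)} = r^{\operatorname{Re}(\alpha)} \operatorname{exp}(i \operatorname{Im}(\alpha)\ln r)$ mixes a genuine radial dilation with an $r$-dependent rotation, so composition intertwines these. Carefully bookkeeping the real and imaginary parts is what produces the formula $\epsilon_\alpha \circ \epsilon_\beta = \epsilon_{\operatorname{Re}(\beta)\alpha + i\operatorname{Im}(\beta)}$.

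I expect the composition identities to be the genuinely delicate part, and my strategy there is to reduce everything to the group law on the exponents transported through $\Theta \circ \Psi$. Since $\Theta \circ \Psi$ is a group isomorphism by Lemma \ref{gpautoC}, composition of multiplicative automorphisms corresponds to composition in $\operatorname{Aut}_{\operatorname{cont}}(\mathbb{C},+)^{2\pi i \mathbb{Z}}$, i.e. to composing the $\mathbb{R}$-linear maps $\phi_{\eta,\alpha}$. Computing $\phi_{\eta_1,\alpha_1} \circ \phi_{\eta_2,\alpha_2}$ on the basis $\{1, i\}$ is a finite $\mathbb{R}$-linear calculation: applying $\phi_{\eta_2,\alpha_2}$ first sends $1 \mapsto \alpha_2$ and $i \mapsto \eta_2 i$, then applying $\phi_{\eta_1,\alpha_1}$ and decomposing $\alpha_2 = \operatorname{Re}(\alpha_2) + i\operatorname{Im}(\alpha_2)$ into its action on $1$ and $i$ yields the composite parameters. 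This translates the nonlinear-looking formulas on the multiplicative side into transparent linear algebra on the additive side, which I would carry out for each of the eight cases (the $\eta$ signs and the placement of conjugation) and then transport back via the isomorphism. The inverse formulas then follow by solving $\operatorname{Re}(\beta)\alpha + i\operatorname{Im}(\beta) = 1$ (respectively its conjugate variant) for $\beta$, giving $\beta = \tfrac{1 - i\operatorname{Im}(\alpha)}{\operatorname{Re}(\alpha)}$, which is well-defined precisely because $\alpha \in \mathbb{C} \setminus i\mathbb{R}$ forces $\operatorname{Re}(\alpha) \neq 0$. Finally, $\epsilon_\alpha$ is a homeomorphism because it is a bijective continuous automorphism with continuous inverse $\epsilon_{(1 - i\operatorname{Im}(\alpha))/\operatorname{Re}(\alpha)}$, both continuous as compositions of $\operatorname{exp}$, the linear map $\phi_{\eta,\alpha}$, and the branch $L$ on the relevant domains.
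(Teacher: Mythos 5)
Your classification of the automorphisms is exactly the paper's argument: invoke Lemma \ref{gpautoC} to pull a continuous multiplicative automorphism back to some $\phi_{\eta,\alpha}\in\operatorname{Aut}_{\operatorname{cont}}(\mathbb{C},+)^{2\pi i\mathbb{Z}}$ via Lemma \ref{fixautoC}, then compute $\exp\circ\phi_{\eta,\alpha}\circ L$ on $z=rs$ to get $r^\alpha s$ or $r^\alpha\bar{s}$. Where you genuinely diverge is in the verification of the identities. The paper stays on the multiplicative side: it computes $\epsilon_\alpha\circ\epsilon_\beta(z)=\epsilon_\alpha\bigl(r^{\operatorname{Re}(\beta)}\cdot r^{i\operatorname{Im}(\beta)}s\bigr)=r^{\operatorname{Re}(\beta)\alpha+i\operatorname{Im}(\beta)}s$ by splitting $r^\beta$ into its positive modulus and its unit-circle factor, verifies the inverse formula by a direct polar computation, and then deduces the remaining compositions from the relations $\overline{\epsilon_\alpha}=\epsilon_\alpha\circ\overline{\epsilon_1}$ and $\overline{\epsilon_1}\circ\epsilon_\alpha=\overline{\epsilon_{\overline{\alpha}}}$. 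You instead transport composition through the group isomorphism $\Theta\circ\Psi$ and compose the $\mathbb{R}$-linear maps on the basis $\{1,i\}$, getting $\phi_{\eta_1,\alpha_1}\circ\phi_{\eta_2,\alpha_2}=\phi_{\eta_1\eta_2,\;\operatorname{Re}(\alpha_2)\alpha_1+\eta_1 i\operatorname{Im}(\alpha_2)}$ in one stroke; all eight sign/conjugation cases, the inverse formulas (solve for the parameter giving $\phi_{1,1}$), and the closure fact $\operatorname{Re}\bigl(\operatorname{Re}(\beta)\alpha+i\operatorname{Im}(\beta)\bigr)=\operatorname{Re}(\alpha)\operatorname{Re}(\beta)\neq 0$ drop out uniformly. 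This is a legitimate and arguably cleaner route: it explains \emph{why} the odd-looking law $(\alpha,\beta)\mapsto\operatorname{Re}(\beta)\alpha+i\operatorname{Im}(\beta)$ appears (it is just composition of matrices on the additive side), whereas the paper's route keeps everything explicit in polar coordinates but has to handle the cases one at a time.

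Two cautions. First, your preliminary polar-form remark that $\epsilon_\alpha\circ\epsilon_\beta$ sends $rs$ to $(r^\beta)^\alpha s=r^{\alpha\beta}s$ is false as stated: $r^\beta$ is not a positive real, so $\epsilon_\alpha$ does not act on it by raising to the $\alpha$, and the correct exponent $\operatorname{Re}(\beta)\alpha+i\operatorname{Im}(\beta)$ is genuinely different from $\alpha\beta$. You flag the problem yourself and your actual argument goes through the additive side, so nothing breaks, but that sentence should be deleted or corrected rather than kept as a ``subtlety to re-express.'' Second, for the homeomorphism claim, ``continuous as a composition of $\exp$, $\phi_{\eta,\alpha}$, and $L$'' needs a word of care: $L$ is discontinuous across its branch cut, and continuity of the composite holds only because $\phi_{\eta,\alpha}$ maps $2\pi i\mathbb{Z}$ into itself, so the jump is annihilated by $\exp$ --- equivalently, because the map descends from a continuous automorphism of $\mathbb{C}/2\pi i\mathbb{Z}$ as in Lemma \ref{gpautoC}; continuity at $0$ also needs the separate (easy) observation that $|\epsilon_\alpha(z)|\to 0$ as $|z|\to 0$, which uses $\operatorname{Re}(\alpha)\neq 0$.
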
 

\begin{rem}
When $K = \mathbb{C}$ and $\alpha \in \mathbb{R}^*$, the restriction ${\epsilon_\alpha|}_{\mathbb{R}}$ is identical to the automorphism $\epsilon_\alpha$ defined for $K = \mathbb{R}$. This consistency in notation is well-justified.
\end{rem}

\subsubsection{Identifying the group structure of the multiplicative automorphisms of $\mathbb{C}$}
By examining the properties of continuous multiplicative automorphisms of $\mathbb{C}$, we can deduce their group structure. Rewriting Theorem \ref{autoC}, we obtain:
\begin{prop} \label{groupaut}
We have the following canonical bijections
$$ \operatorname{Aut}_{\operatorname{cont}} (\mathbb{C}^*, \cdot) \simeq^{\chi_1} (\mathbb{C}\setminus i\mathbb{R} \times \{ \pm 1\},\star) \simeq^{\chi_2}   (\mathbb{R}^*
\times \mathbb{R}\times \{ \pm 1\}, \smallstar)$$
and 
$$ \operatorname{Aut}_{\operatorname{cont}} (\mathbb{C}^*, \cdot) \simeq^{\chi_2 \circ \chi_1}    (\mathbb{R}_{>0}\times \mathbb{R}\times \{ \pm 1\}, \smallstar)$$
where 
\begin{itemize} 
\item the group operation $\star$ on $\mathbb{C}\setminus i\mathbb{R} \times \{ \pm 1\}$ as follows: for any $(\alpha, u)$ and $(\beta,v)$ in $\mathbb{C}\setminus i\mathbb{R}\times \{ \pm 1\}$, the operation is defined as
$$(\alpha, u) \star (\beta, v) = (\operatorname{Re}(\beta) \alpha + u i\operatorname{Im}(\beta), uv).$$
The identity element of the group is $(1,1)$, and for any $(\alpha, u)$ in $\mathbb{C}\backslash i\mathbb{R}\times \{ \pm 1\}$, the inverse element is given by $\left( \frac{1 - iu \operatorname{Im}(\alpha)}{\operatorname{Re}(\alpha)}, u\right)$.
\item the group operation $\smallstar$ on $\mathbb{R}^*\times \mathbb{R}\times \{ \pm 1\}$ is defined as follows: For all $a, c\in\mathbb{R}^*$, $b, d\in \mathbb{R}$, and $u, v \in \{ \pm 1\}$, we have 
$$(a, b, u)\smallstar (c,d, v)=  (ac, cb + u d , uv).$$
The identity element for $\smallstar$ is $(1, 0,1)$ and the inverse of $(a, b, u)$ is $(\frac{1}{a} ,  -\frac{ub}{a}, u)$.
\item $\chi_1: \operatorname{Aut}_{\operatorname{cont}} (\mathbb{C}^*, \cdot) \rightarrow (\mathbb{C}\setminus i\mathbb{R} \times \{ \pm 1\},\star)$ sends $\epsilon_\alpha$ to $(\alpha, 1)$ and $\overline{\epsilon_\alpha}$ to $(\alpha, -1)$. 
\item $\chi_2: (\mathbb{C}\setminus i\mathbb{R}\times \{ \pm 1\}, \star)\rightarrow (\mathbb{R}^*\times \mathbb{R}\times \{ \pm 1\}, \smallstar)$ sends $(a+ib, u)$ to $(a, b, u)$.
\end{itemize}
\end{prop}
\begin{rem}
We note the following consequence of Proposition \ref{groupaut},
\begin{enumerate} 
\item $(\mathbb{C}\backslash i\mathbb{R}\times \{1\}, \star)$ is a normal subgroup of $(\mathbb{C}\backslash i\mathbb{R}\times \{\pm 1\}, \star)$, $(\{ 1\} \times \{ \pm 1\}, \star)$ is a subgroup of $(\mathbb{C}\backslash i\mathbb{R}\times \{\pm 1\}, \star)$, and the group $(\mathbb{C}\backslash i\mathbb{R}\times \{\pm 1\}, \star)$ is the internal semi-direct product of $(\mathbb{C}\backslash i\mathbb{R}\times \{1\}, \star)$ and $(\{ 1\} \times \{ \pm 1\}, \star)$.
\item $\{ (1, 0 , 1), (-1, 0,-1)\}$ is the centre of  $(\mathbb{R}^*\times \mathbb{R} \times \{\pm  1\}, \smallstar)$. 
\item $\mathbb{R}^*\times \{ 0\}\times \{  1\}$ is a subgroup of $\mathbb{R}^*\times \mathbb{R}\times \{ \pm 1\}$, $ \{  1\} \times \mathbb{R}\times \{ \pm 1\}$ is a normal subgroup of $\mathbb{R}^*\times \mathbb{R}\times \{ \pm 1\}$, and $\mathbb{R}^*\times \mathbb{R}\times \{ \pm 1\}$ is the internal semi-direct product of $ \{  1\} \times \mathbb{R}\times \{ \pm 1\}$ and $\mathbb{R}^*\times \{ 0\}\times \{  1\}$. 
\item We can endow $\mathbb{R}^*\times \mathbb{R}\times \{ \pm 1\}$ with the product topology, where the topology is defined component-wise using the natural topology of $\mathbb{R}$. With this topology, $(\mathbb{R}^*\times \mathbb{R}\times \{ \pm 1\}, \smallstar)$ becomes a topological group, and it has four connected components: $\mathbb{R}_{>0}\times \mathbb{R}\times \{ 1\}$, $\mathbb{R}_{<0}\times \mathbb{R}\times \{ 1\}$, $\mathbb{R}_{>0}\times \mathbb{R}\times \{ -1\}$, and $\mathbb{R}_{<0}\times \mathbb{R}\times \{ -1\}$. 
\end{enumerate}
\end{rem}
From Corollary \ref{cor1}, we deduce the set of field structures transported from $\mathbb{C}$ by multiplicative automorphisms of \(\mathbb{C}\) continuous over $\mathbb{C}^*$ (resp. $\mathbb{C}$) can be endowed with a group operation.
\begin{corollary} \label{isomautC}
We have
\[
(\mathcal{R}^{{\sf cont}}_{{\sf m}}(\mathbb{C}^*), \oast) \simeq  (\operatorname{Aut}_{\operatorname{cont}}(\mathbb{C}^*, \cdot)/\langle \overline{\epsilon_1} \rangle, \bigostar) \simeq^\Phi (\mathbb{R}^*\times \mathbb{R} \times \{ 1\}, \smallstar)
\]
and
\[
(\mathcal{R}^{{\sf cont}}_{{\sf m}}(\mathbb{C}), \oast) \simeq (\operatorname{Aut}_{\operatorname{cont}}(\mathbb{C}, \cdot)/\langle \overline{\epsilon_1} \rangle, \bigostar) \simeq^\Phi  (\mathbb{R}_{>0}\times \mathbb{R} \times \{  1\}, \smallstar)
\]
where 
\begin{itemize}
\item $\mathcal{R}^{{\sf cont}}_{{\sf m}}(\mathbb{C}^*)$ (resp. $\mathcal{R}^{{\sf cont}}_{{\sf m}}(\mathbb{C})$) is the set of field structures transported from $\mathbb{C}$ by multiplicative automorphisms of \(\mathbb{C}\) continuous over $\mathbb{C}^*$ (resp. $\mathbb{C}$). and the operation $\oast$ is defined by $[(\mathbb{C} , +_{\epsilon_\alpha}, \cdot) ] \oast [(\mathbb{C} , +_{\epsilon_\beta}, \cdot)] = [(\mathbb{C} , +_{\epsilon_{\alpha\star \beta}}, \cdot)]$, for all $\alpha, \beta \in \mathbb{C} \setminus i \mathbb{R}$. 
\item $\operatorname{Aut}_{\operatorname{cont}}(\mathbb{C}, \cdot)/\langle \overline{\epsilon_1} \rangle  $ is the quotient group of $\operatorname{Aut}_{\operatorname{cont}}(\mathbb{C}, \cdot)$ by the subgroup $\langle \overline{\epsilon_1} \rangle$ acting on the right, the operation $\bigostar$ between the classes $[\epsilon_\alpha]$ and $[\epsilon_\beta]$ is defined as $[\epsilon_\alpha] \bigostar [\epsilon_\beta] = [\epsilon_{\alpha \filledstar \beta}]$, for all $\alpha, \beta \in \mathbb{C}\setminus i\mathbb{R}$. 
 We recall that $\operatorname{Aut}(\mathbb{C}, +, \cdot)=\langle \overline{\epsilon_1} \rangle$. 
 \item $\Phi: (\mathbb{R}^*\times \mathbb{R} \times \{ 1\}, \smallstar) \rightarrow (\operatorname{Aut}_{\operatorname{cont}}(\mathbb{C}, \cdot)/\langle \overline{\epsilon_1} \rangle, \bigostar)$ sending $(a, b, 1)$ to $[\epsilon_{a+ib}]$
\end{itemize}
\end{corollary}

\begin{rem}
We note that the operation $\bigostar$ in Corollary \ref{isomautC} is defined by explicitly choosing a representative from the coset of $\operatorname{Aut}_{\operatorname{cont}}(\mathbb{C}, \cdot)/\langle \overline{\epsilon_1} \rangle  $ to construct the operation. This operation does not correspond to the class of the composition of the maps $\epsilon$'s. In other words, we are simply transferring the group law $\filledstar$ on $\mathbb{C}\setminus i\mathbb{R}$ to $\operatorname{Aut}_{\operatorname{cont}}(\mathbb{C}, \cdot)/\langle \overline{\epsilon_1} \rangle$ through the bijective map $\Lambda$.
\end{rem}
We can describe the set of multiplicative automorphism of $\mathbb{C}$ continuous over $\mathbb{C}^*$ as another semi-direct product as follows. We set 
$$\operatorname{Aut} (\mathbb{C}, \cdot )^{\langle \overline{\epsilon_1}\rangle }  = \{ \phi \in\operatorname{Aut} (\mathbb{C}, \cdot ) | \overline{\epsilon_1} \circ \phi= \phi \circ \overline{\epsilon_1}\}.$$ We obtain:
\begin{itemize}
\item $(\operatorname{Aut} (\mathbb{C}, \cdot )^{\langle \overline{\epsilon_1} \rangle}, \circ)$ is a subgroup of $(\operatorname{Aut} (\mathbb{C}, \cdot ), \circ)$.

\item 
$$ \begin{array}{lll} \operatorname{Aut} (\mathbb{C}, \cdot)^{\langle \overline{\epsilon_1} \rangle} 
 &=&  \{ \phi \in \operatorname{Aut} (\mathbb{C}, \cdot) | \phi (\mathbb{R} ) = \mathbb{R} \}\\ 
&=& \{ \phi \in \operatorname{Aut} (\mathbb{C}, \cdot) | \phi (\mathbb{R}_{>0} ) = \mathbb{R}_{>0}  \}\\ 
&\subseteq &  \{ \phi \in \operatorname{Aut}(\mathbb{C}, \cdot) | \phi (\mathbb{S} ) = \mathbb{S}  \}\\
&\simeq&  (\operatorname{Aut} (\mathbb{R}, \cdot ), \circ) \simeq (\operatorname{Aut} (\mathbb{R}_{>0}, \cdot ), \circ) \simeq  (\operatorname{Aut} (\mathbb{S}, \cdot ), \circ).
 \end{array} $$
 
 \item the group isomorphisms $$( \mathbb{R}^*, . )  \simeq (\operatorname{Aut}_{\operatorname{cont}} (\mathbb{C}^*,\cdot)^{\langle \overline{\epsilon_1} \rangle}, \circ) \text{ and } ( \mathbb{R}^*_{>0}, . )  \simeq (\operatorname{Aut}_{\operatorname{cont}} (\mathbb{C},\cdot )^{\langle \overline{\epsilon_1} \rangle}, \circ).$$ 
 sending $\alpha \in\mathbb{R}^*$ to $\epsilon_\alpha$. 
\end{itemize} 
Now, the set 
\[
R_{\operatorname{cont}}(\mathbb{C}, \cdot) = \{ \phi \in \text{Aut}_{\text{cont}}(\mathbb{C},\cdot) \mid \forall z \in \mathbb{C}, \exists s \in \mathbb{S}, \phi(z) = zs \}
\]
is a normal subgroup of $\text{Aut}_{\text{cont}}(\mathbb{C},\cdot)$ isomorphic to $\{1\} \times \mathbb{R} \times \{\pm 1\}$, and $\text{Aut}_{\text{cont}}(\mathbb{C},\cdot)$ is the internal semi-direct product of $R_{\text{cont}}(\mathbb{C}, \cdot)$ by $\text{Aut}(\mathbb{C},\cdot)^{\langle \overline{\epsilon_1} \rangle}$.

\subsection{Complex fields induced by bijective map}
We can extend the notions of the real part, imaginary part, modulus, and complex conjugate to the continuous complex field induced by a multiplicative automorphism (see also Appendix). More precisely, given $\phi$ a bijection from $\mathbb{C}$ to $\mathbb{C}$. 
\begin{itemize} 
\item We denote $ \mathbb{R}_\phi$ for the field $( \phi^{-1} (\mathbb{R}), +_\phi, \cdot_\phi)$. $ \mathbb{R}_\phi$ is a subfield of $\mathbb{C}_\phi$ isomorphic to $(\mathbb{R}, + , \cdot)$. We have $[ \mathbb{C}_\phi :\mathbb{R}_\phi] =2$. Moreover, $\{1, \ i_\phi\}$ is a basis for $\mathbb{C}_\phi$ over $\mathbb{R}_\phi$. When $\phi$ induces an endo-bijection on $\mathbb{R}$,  $\phi^{-1} (\mathbb{R})=\mathbb{R}$ showing that the notation is well-defined.
\item We define the $\phi$-real part of $z$, denoted by $\operatorname{Re}_\phi(z)$, to be 
$$\operatorname{Re}_\phi(z)= \phi^{-1} ( \operatorname{Re} ( \phi (z) ))$$ and the $\phi$-imaginary part of $z$, denoted by $\operatorname{Im}_\phi(z)$, to be $\operatorname{Im}_\phi(z) =\phi^{-1} (\operatorname{Im} ( \phi (z) ))$. We have 
$$\operatorname{Re}_\phi(z), \operatorname{Im}_\phi (z) \in \mathbb{R}_\phi$$ and for any $z \in \mathbb{C}$, $ z = \operatorname{Re}_\phi(z) +_\phi i_\phi  \operatorname{Im}_\phi(z) .$
\item We define the $\phi$-conjugate of $z$ to be $\overline{z}^{\phi} = Re_\phi ( z) -_\phi i_\phi  Im_\phi (z)  $.
\item We define the $\phi$-modulus of $z$ to be $|z|_\phi  = \phi^{-1} (|\phi (z) |) $ which is an element $\phi^{-1} (\mathbb{R}_{>0})$.  
\end{itemize} 
In particular, the following diagram:
\[
\xymatrix{
\mathbb{C} \ar[rr]^{\phi^{-1}}_{\cong} \ar[d]^{\iota}_{\cong} && \mathbb{C}_\phi \ar[d]^{\iota_\phi}_{\cong} \\
\mathbb{R}^2 \ar[rr]^{\phi^{-1} \times \phi^{-1}}_{\cong} && \mathbb{R}_\phi^2
}
\]
is commutative, where the mappings are defined as follows: for any $z\in \mathbb{C}$,
\begin{itemize} 
\item $\iota(z) = (\operatorname{Re}(z), \operatorname{Im}(z))$,
\item $\iota_\phi(z) = (\operatorname{Re}_\phi(z), \operatorname{Im}_\phi(z))$,
\end{itemize} 
Moreover, as indicated in the diagram, $\phi^{-1}$, $\iota$, $\iota_\phi$, and $\phi^{-1} \times \phi^{-1}$ are isomorphisms. 
For all $z \in \mathbb{C}$, we have the following fundamental identities:
\begin{enumerate}
	\item $\phi(\overline{z}^\phi) = \overline{\phi(z)}$,
	\item $|z|_\phi^2 = z \overline{z}^\phi = \operatorname{Re}_\phi(z)^2 +_\phi \operatorname{Im}_\phi(z)^2$,
	\item $\operatorname{Re}_\phi(z) = \frac{z +_\phi \overline{z}^\phi}{2_\phi}$,
	\item $\operatorname{Im}_\phi(z) = \frac{z -_\phi \overline{z}^\phi}{(2i)_\phi}$,
	\item $|\phi^{-1}(e^{i\theta})|_\phi = 1$,
	\item $z = |z|_\phi \phi^{-1}(e^{i\operatorname{arg}(\phi(z))})$.    
\end{enumerate}

We can even introduce the concepts of cosine and sine functions within the present context.
More precisely, for any $\alpha \in \mathbb{C}\backslash i \mathbb{R}$.
\begin{enumerate}
\item we denote $ \operatorname{cos}_\alpha(z):= \operatorname{cos}( \operatorname{arg}(\epsilon_\alpha(z)))$ and $ \operatorname{sin}_\alpha(z):= \operatorname{cos}( \operatorname{arg}(\epsilon_\alpha(z)))$. 
\item we denote $\overline{ \operatorname{cos}_\alpha}(z):= \operatorname{cos}( \operatorname{arg}(\overline{ \epsilon_\alpha}(z)))$ and $ \overline{ \operatorname{sin}_\alpha}(z):= \operatorname{cos}( \operatorname{arg}(\overline{ \epsilon_\alpha}(z)))$. 
\end{enumerate}
In particular, we have, for any $\alpha \in \mathbb{C}\backslash i \mathbb{R}$ and $z\in \mathbb{C}$, 
\begin{enumerate} 
\item We have $\epsilon_\alpha^{-1} ( \mathbb{R}) = \mathbb{R}^{\frac{ 1- i \operatorname{Im} (\alpha) }{ \operatorname{Re} (\alpha)}} $ and $\overline{\epsilon_\alpha}^{-1} ( \mathbb{R}) = \mathbb{R}^{\frac{ 1+ i \operatorname{Im} (\alpha) }{ \operatorname{Re} (\alpha)}} $.
\item $i_{\epsilon_\alpha} =i$, $\operatorname{arg}(\epsilon_\alpha(z))= \operatorname{Im} (\alpha) \operatorname{ln} |z|+  \operatorname{arg}(z)$, $ |z|_{\epsilon_\alpha} =|z|^{1-iIm (\alpha)} $
$$\operatorname{Re}_{\epsilon_\alpha} ( z)= |z|^{1-i \operatorname{Im}(\alpha) }  \operatorname{sgn} (\operatorname{cos}_\alpha(z)) |\operatorname{cos}_\alpha(z)|^{\frac{ 1- i \operatorname{Im}(\alpha) }{\operatorname{Re}(\alpha) }} ,$$
$$ \operatorname{Im}_{\epsilon_\alpha} ( z)=  |z|^{1-i \operatorname{Im}(\alpha) }  \operatorname{sgn} (\operatorname{sin}_\alpha(z)) |\operatorname{sin}_\alpha(z)|^{\frac{ 1- i \operatorname{Im}(\alpha)  }{\operatorname{Re}(\alpha)}}. $$ 
\item $i_{\overline{\epsilon_\alpha}} =-i$, $\operatorname{arg}(\overline{\epsilon_\alpha} (z))= \operatorname{Im} (\alpha) \operatorname{ln} |z|-  \operatorname{arg}(z)$, $ |z|_{\overline{\epsilon_\alpha}} =|z|^{1+i\operatorname{Im}(\alpha)} $
$$\operatorname{Re}_{\overline{\epsilon_\alpha}} ( z)= |z|^{1+i \operatorname{Im}(\alpha) }  \operatorname{sgn} (\overline{ \operatorname{cos}_\alpha}(z)) |\overline{ \operatorname{cos}_\alpha}(z)|^{\frac{ 1+ i \operatorname{Im}(\alpha) }{\operatorname{Re}(\alpha) }} ,$$
$$ \operatorname{Im}_{\overline{\epsilon_\alpha}} ( z)=  |z|^{1+i \operatorname{Im}(\alpha) }  \operatorname{sgn} (\overline{ \operatorname{sin}_\alpha}(z)) |\overline{ \operatorname{sin}_\alpha}(z)|^{\frac{ 1+ i \operatorname{Im}(\alpha)  }{\operatorname{Re}(\alpha)}}. $$ 
\end{enumerate}

\section{Characterizing additive structures on a fixed scalar group}\label{add_struct}
In this section, we go a bit further with the question of understanding additive structures over a given multiplicative group. This question is motivated by the theory of near-vector spaces. 
We recall that a scalar group $(F, \cdot, 1, 0, -1)$ is a monoid $(F\setminus \{ 0 \} , \cdot, 1)$ such that $(F\setminus \{ 0 \} , \cdot, 1)$ is a group, $0, -1, 1 \in F$, for all $\alpha \in F$, $\alpha \cdot 0 = 0 \cdot \alpha =0$ and $\pm 1 $ is the solution set of the equation $x^2 -1$. Indeed, in order to understand all the near-vector spaces over the scalar group $(F, \cdot)$, we need to understand all the additive binary operations, $+$, such that $(F, +,  \cdot)$ is a near-field.  

Given $(F, \cdot, 1, 0, -1)$ a scalar group and $(F, +, \cdot)$ a left-near field. 
Then 
\begin{itemize} 
\item $0$ is the zero element with respect to $+$. (Indeed, it is the only non-invertible element.)
\item Given $\alpha\in F$, the additive inverse of $\alpha$ with respect to $+$ is either $(-1)\cdot \alpha$. Moreover, for all $\alpha, \beta \in F$, we have $\alpha \cdot (-1) = (-1) \cdot \alpha$. (To see this, we denote by $\boxminus 1$ the additive inverse of $1$, the result follows from the equality $(\boxminus 1)^2=1$ which is deduced from the equality  $0 =(\boxminus  1) (1+ (\boxminus  1 ))= (\boxminus  1) + ( \boxminus  1)^2 $.  Indeed, it follows that either $\boxminus  1=1$ or $\boxminus  1=-1$. Noting that when $\boxminus  1=1$, then $1 + 1 =0$, that is ${\sf Char}(F)=2$, we deduce the result. Finally, to see that $\alpha\cdot (-1) = (-1)\cdot \alpha$, for all $\alpha\in F$, it suffices to observe that $(\alpha\cdot (-1) \cdot\alpha^{-1})^2=1$, for all $\alpha \in F^*$.)
\end{itemize}

Given a scalar group, we observe that using the distributivity of the multiplication on the addition it is enough to know $1+ \alpha$ for all $\alpha\in F$, to fully determine completely an addition $+$ such that $(F, + , \cdot)$ is a near-field. This data can be viewed as the map $\rho: F \rightarrow F$ sending $\alpha$ to $1+\alpha$. The next definition identifies the properties that such a map needs to satisfy to indeed define an addition. 
\begin{defin} \label{nfam}
Let $(F, \cdot)$ be a scalar group.
 A map $\rho: F \rightarrow F$ is said to be {\sf a near-field addition map on $(F, \cdot)$} if for all $\alpha \in F^*$ and $\beta \in F$, \\
$1.$ $\rho (0) =1$. We refer to this property as the identity property of $\rho$.\\
$2.$ $\rho (-1)=0$. We refer to this property as the inverse property of $\rho$.\\
$3.$ $\rho ( \alpha^{-1} )= \alpha^{-1} \rho (\alpha)$. We refer to this property as the abelian property of $\rho$. \\
$4.$ $\rho ( \alpha \rho (\beta)) = \alpha \rho ( \beta \rho ( (\alpha \beta)^{-1}))$,
 when $\alpha , \ \beta \in F^*$. We refer to this property as the associative property of $\rho$.
\end{defin} 

\begin{rem} \label{assoc}
Let $(F, \cdot)$ be a scalar group, $\rho$ be a near-field addition map, and $ \alpha \in F^* \backslash \{ -1\}$. Then we have $\rho ( \alpha \rho (\alpha^{-1} \beta)) = \rho ( \alpha) \rho (   \rho ( \alpha)^{-1} \beta)$.
\end{rem}

\begin{deflem}\label{nfa}
Let $(F, \cdot)$ be a scalar group.
\begin{enumerate} 
\item Given a near-field $(F, +, \cdot)$, we denote $\rho_{+}$ the map sending $\alpha$ to $ 1+ \alpha$. $\rho_{+}$ is a near-field addition map. 
\item Given a near-field addition map $\rho: F \rightarrow F$, we define the operation $+_\rho$ as the operation defined for all $\alpha, \beta \in F$ such that $\alpha +_\rho \beta = \alpha \rho (\alpha^{-1} \beta) $ when $\alpha\neq 0$  and $\alpha +_\rho \beta = \beta$, otherwise. $(F, +_\rho, \cdot)$ is a near-field. We denote $(F, +_\rho, \cdot)$ simply as ${}_\rho F$ and refer to it as a $\rho$-near-field. When $\alpha_1, \cdots, \alpha_s\in F$, we denote ${}^\rho \sum_{k=1}^s \alpha_k = \alpha_1 +_\rho \cdots  +_\rho \alpha_s$. 
\end{enumerate}
\end{deflem} 
\begin{proof}
Let $(F, \cdot)$ be a scalar group. 
\begin{enumerate}
\item Suppose $(F , + , \cdot)$ is a left near field. Let $\mathcal{A} : F \times F \rightarrow F$  be the binary operation sending $(\alpha , \beta)$ to $\alpha + \beta$ and $\rho = \mathcal{A} ( 1,-)$.  From the observation made at the beginning of the section, we have $\rho(0) =\mathcal{A} ( 1,0)= 1+ 0 = 1$, and  $ \mathcal{A} (1, -1) = \rho(-1) = 0$. Furthermore, the commutativity of $\mathcal{A}$ leads to $\rho(\alpha^{-1}) = \mathcal{A} (1, \alpha^{-1}) = \alpha^{-1}  \mathcal{A} (\alpha, 1) = \alpha^{-1}  \mathcal{A} (1, \alpha) = \alpha^{-1} \rho(\alpha)$, for any $\alpha$ non-zero element of $F$. 
 Given $a, b , c\in F^*$. The associativity property means that $\mathcal{A}(\mathcal{A}(a, b), c) = \mathcal{A}(a, \mathcal{A}(b, c))$. The left-hand side gives
\bea
\mathcal{A}(\mathcal{A}(a, b), c) &=& c \mathcal{A}(1, c^{-1}  a\rho(a^{-1} b)) = c \rho(c^{-1} a\rho(a^{-1} b)).
\eea
The right-hand side gives
\bea
\mathcal{A}(a, \mathcal{A}(b, c))=  a \mathcal{A}(1, a^{-1} \rho(b^{-1} c)) 
= a \rho(a^{-1} b\rho(b^{-1} c)).
\eea
Thus,\begin{equation}  \rho(c^{-1} a  \rho(a^{-1}  b)=c^{-1}  a \rho(a^{-1} b \rho(b^{-1} c)).\end{equation} Setting $\alpha = c^{-1} a$ and $\beta = a^{-1} b$, we have $\alpha \beta = c^{-1} b$ and thus $(\alpha \beta)^{-1} = b^{-1}  c$. 
 Substituting these into Equation (1) we obtain the result $\rho ( \alpha \rho (\beta)) = \alpha \rho ( \beta \rho ( (\alpha \beta)^{-1})$. \\ 
\item Suppose that $\rho$ is a near-field addition map. 
It is clear that $F$ is closed under the operation $+_\rho$. 
By definition of a near-field addition map we know that $\rho$ has the properties $\rho (0) =1$, $\rho (-1)=0$, $\rho ( \alpha^{-1} )= \alpha^{-1} \rho (\alpha)$ and  $\rho ( \alpha \rho (\beta)) = \alpha \rho ( \beta \rho ( (\alpha \beta)^{-1})$, for all $\alpha, \beta \in F^*$.  
For any $\alpha,\ \beta, \ \gamma \in F^*$,\\  
$$\alpha +_\rho  \beta =  \alpha \rho(\alpha^{-1} \beta )=\alpha \alpha^{-1} \beta \rho(\beta^{-1} \alpha )= \beta \rho(\alpha \beta^{-1} )=  \beta +_\rho  \alpha$$
by the abelian property of $\rho$. 
Moreover, for any $\alpha \in F^*$, $$\alpha +_\rho 0  = \alpha \rho(\alpha^{-1} 0) =\alpha \rho(0)=  \alpha.$$ and $0 +_\rho \alpha =\alpha$, for all $\alpha \in F^*$. Thus, 0 is the zero element. Moreover, for any $\alpha \in F^*$, 
$$\alpha +_\rho (-\alpha) = \alpha \rho(-1)  = \alpha  0 = 0.$$ The result is still valid for $\alpha =0$. This shows that each $\alpha \in F$ has an additive inverse $-\alpha$. \\
Let $a, b \in F^*$ and $c \in F$. We now prove the associativity of $+_\rho$ 
$$\begin{array}{lll} (a+_\rho b) +_\rho c &=& a \rho(a^{-1} b) +_\rho c  =   c   +_\rho a \rho(a^{-1} b) = c \rho( c^{-1}  a  \rho(a^{-1}  b))\\
&=& a \rho(a^{-1} b \rho(b^{-1} c))= a+_\rho (b +_\rho c)
\end{array} $$
from the associative property of $\rho$ applied to $\alpha = c^{-1} a$ and $\beta = a^{-1} b$. When $a=0$ or $b=0$ the result is clear. 
For the distributivity of $\cdot$ over $+_\rho$, let $a , \alpha , \beta \in F^*$. We have $ a (\alpha \rho (\alpha^{-1} \beta))= a (\alpha +_\rho \beta )$ and $ (a \alpha)  \rho (\alpha^{-1} \beta) = a \alpha +_\rho a \beta $. We get $a (\alpha +_\rho \beta )= a \alpha + _\rho a \beta$. When either $a=0$ or $\alpha=0$ and $\beta=0$, the result is clear. This concludes the proof that $F$ is a near-field.
\end{enumerate}
\end{proof}
\begin{rem} \label{rho0} Let $(F, \cdot)$ be a scalar group and $\rho$ be a near-field addition map. 
\begin{enumerate} 
\item By the means of an induction, we prove that $\rho^n(0) \rho ( \rho^n(0)^{-1} \rho^m(0))=\rho^{n+m} (0) $, for all $n, \ m \in \mathbb{N}$. 
\item For all $n \in \mathbb{N}$, $\rho^n(0) \rho ( - \rho^n(0)^{-1} \rho^n(0) )=\rho^n(0) \rho ( - 1 )=0$
\item A near-field addition map is bijective. More precisely, if $\rho$ is a near-field addition map then its inverse is the map sending $\alpha$ to $-\rho(-\alpha)$. In particular, $\rho (\alpha ) \neq 0$ for any $\alpha \neq -1$.
\item For every $\alpha \in F \backslash \{ -1\}$, we have $\rho (\alpha)\in F^*$.  
\item Let $\alpha, \beta \in F$ and $n \in\mathbb{N}$.  By the means of induction, we can prove the following
 $$ {}^\rho \! \sum_{k=1}^n \alpha =  \alpha \rho^{n}(0) \text{ and }  {}^\rho \! \sum_{k=1}^n \alpha  \,+_\rho \,{}^\rho \! \sum_{k=1}^n \beta =  (\alpha +_\rho \beta) \rho^{n}(0) .$$ 
\end{enumerate}
\end{rem}
We next define the notion of a characteristic map. This map will permit us to define the characteristic of a near-field. Using Remark \ref{rho0} and Remark \ref{assoc}, we obtain as, expected and is known, that the characteristic of a near-field is either $0$ or a prime number. The characteristic map defines an embedding of a prime field onto any near-field. 
\begin{deflem} \label{charac} \label{rhocha}
 Let $(F, \cdot)$ be a scalar group and $\rho$ be a near-field addition map. 
 We define the $\rho$-characteristic map, denoted $\chi_\rho$, to be the map 
$$ \begin{array}{llll} \chi_\rho: & \mathbb{Z} &\rightarrow &F\\
& n &\mapsto &  \operatorname{sgn} (n)\rho^{|n|}(0). \end{array}
$$
where $\rho^{|n|}$ denotes $\rho$ composite with itself $|n|$ times when $n\neq 0$ and we set $\rho^{0}= \operatorname{id}$. We also denote $C_\rho$ the image of the map $\rho$. 
 Then $\chi_\rho$ is a ring homomorphism from $(\mathbb{Z}, +, \cdot )$ to $(F, +_\rho, \cdot)$. $(C_\rho , +_\rho)$ is a cyclic group isomorphic either to $\mathbb{Z}$, when $\chi_\rho$ is one-to-one, or $\mathbb{F}_p$ for some prime number $p$ otherwise. 
 When $
 \chi_\rho$ is one-to-one, $ \chi_\rho$ naturally induces a field morphism from $( \mathbb{Q}, +, \cdot)$ to $(F, +_\rho, \cdot)$
 $$\begin{array}{llll} \widetilde{\chi_\rho} : & \mathbb{Q} & \rightarrow  & F\\ & \frac{n}{m} & \mapsto & \operatorname{sgn} (nm)\rho^{|n|}(0) (\rho^{|m|}(0))^{-1}
 \end{array} $$ 
  localizing at the prime ideal $(0)$ of $\mathbb{Z}$. 
 Otherwise, $ \chi_\rho$ naturally induces a field morphism from $( \mathbb{F}_p, +, \cdot)$ to $(F, +_\rho, \cdot)$
  $$\begin{array}{llll} \widetilde{\chi_\rho} : & \mathbb{F}_p & \rightarrow  & F\\ & [n]_p & \mapsto & \operatorname{sgn} (n)\rho^{|n|}(0)
 \end{array} $$
using the first isomorphism theorem.  The {\sf characteristic $p$} of $\rho$ denoted $char(F)$ is $0$ when $\chi_\rho$ is one-to-one and $p$ when $ker(\chi_\rho)=p \mathbb{Z}$.  We denote $F_\rho$ the image of $\widetilde{\chi_\rho}$. Moreover, $F$  is a $F_\rho$-vector space and $F_p$ distributes on any element of $F$ and $F_\rho^*$ is a commutative multiplicative subgroup of $F^*$. 
\end{deflem}

When $F$ is finite near-field, since $F$ is a $F_p$-vector space, we have $F \simeq  \mathbb{F}_p^n$ where $p = {\sf Char}(F)$ and $n$ is the integer such that $|F|=p^n$. This proves the uniqueness of the additive map up to additive isomorphism.
From Proposition \ref{bij+} and \S 3.1, the additions on $\mathbb{F}_{p^n}$ are given by $ ( \alpha^{a} + \beta^{a})^{a^{-1_n}}$ where $a \in U_{p^n-1}$ and $a^{-1_n}$ is a representative for the multiplicative inverse of $a$ in $U_{p^n-1}$.

We have seen that depending on the field structure put on $\mathbb{Q}$, $\mathbb{Q}$ might not be a prime field (see end of section $\S 3.2$). The next result characterize the possible field structures on the scalar group $(\mathbb{Q}, \cdot)$.
 \begin{prop}
There exists an addition $\boxplus$, such that $(\mathbb{Q}, \boxplus, \cdot)$ is a field isomorphic to a valuation field $(K , + , \cdot)$ if and only if  $\mathcal{O}_K$ is a unique factorization domain with unit group $\{\pm 1\}$ and the cardinality of the set of its irreducible elements has the same cardinality as $\mathbb{N}$.
 \end{prop}
 \begin{proof}
Suppose that there exists an addition $\boxplus$, and an isomorphism $\Psi: (\mathbb{Q}, \boxplus, \cdot)\rightarrow (K , + , \cdot)$. Since $\Psi$ is a multiplicative automorphism, it sends $\pm 1$ to $\pm 1$, irreducible element to irreducible element, and $0$ to $0$. For the converse, any bijection from the set of prime number to the irreducible elements of $\mathcal{O}_K$ up to unit, extended by multiplication, sending $\pm 1$ to $\pm 1$ and $0$ to $0$ defines a field isomophism. 
 \end{proof}
From the previous lemma, we observe that \((\mathbb{F}_3(x), +, \cdot)\) is another field for which there exists an alternative addition \(\boxplus\) and an isomorphism \(\Psi: (\mathbb{Q}, \boxplus, \cdot) \rightarrow (\mathbb{F}_3(x), +, \cdot)\) (see also end of \S 2.1). It would be of considerable interest to determine which fields are isomorphic under a change in the addition operation, as these fields must exhibit very similar arithmetic properties.

\newpage
\section*{Appendix} 
The set of figures below illustrates the complex plane coordinate lines transported by multiplicative automorphisms of the form $\epsilon_\alpha$. 

\begin{center}
\begin{multicols}{2}
\begin{center}
\includegraphics[scale=0.43]{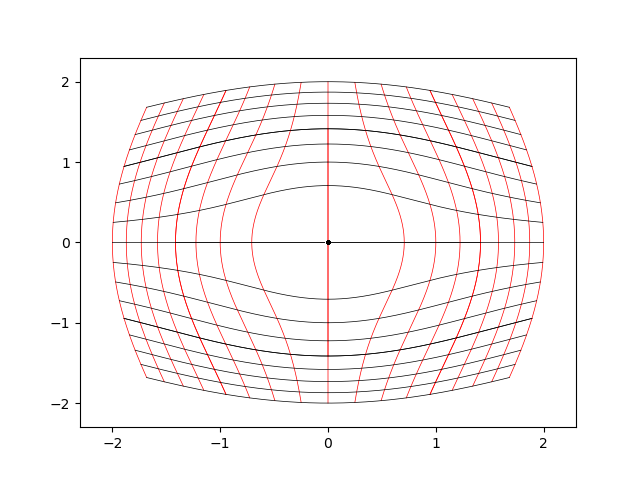}\\
$\alpha=\frac{1}{2}$
\end{center}
\columnbreak
\begin{center}
\includegraphics[scale=0.43]{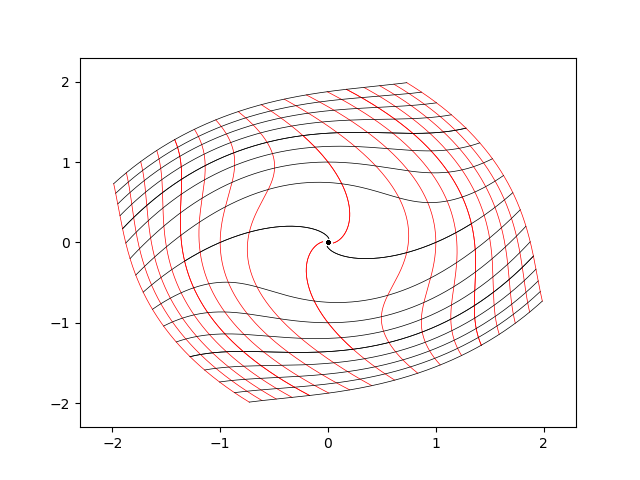}\\
$\alpha=\frac{1}{2}\left(\frac{\sqrt{3}}{2} + \frac{1}{2}i\right)$
\end{center}
\end{multicols}

\begin{multicols}{2}
\begin{center}
\includegraphics[scale=0.43]{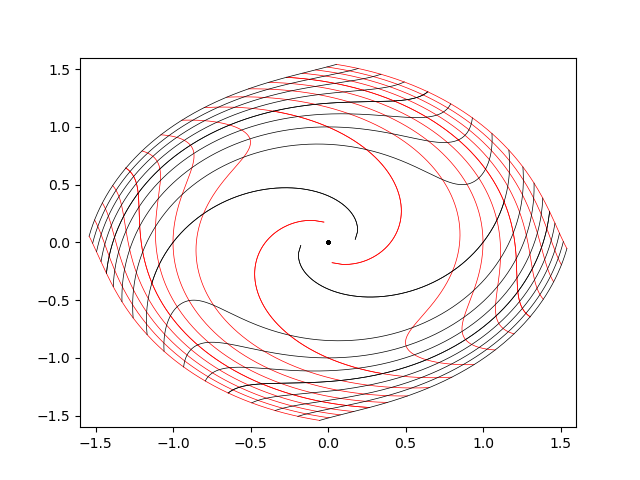}\\
$\alpha=\frac{1}{2}\left(\frac{1}{2} + \frac{\sqrt{3}}{2}i\right)$
\end{center}
\columnbreak
\begin{center}
\includegraphics[scale=0.43]{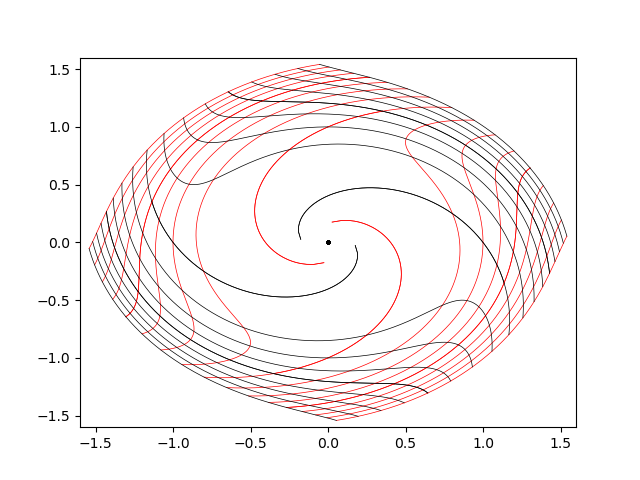}
$\alpha=\frac{1}{2}\left(\frac{1}{2} - \frac{\sqrt{3}}{2}i\right)$
\end{center}
\end{multicols}
\end{center}
\newpage

\begin{multicols}{2}
\begin{center}
\includegraphics[scale=0.4]{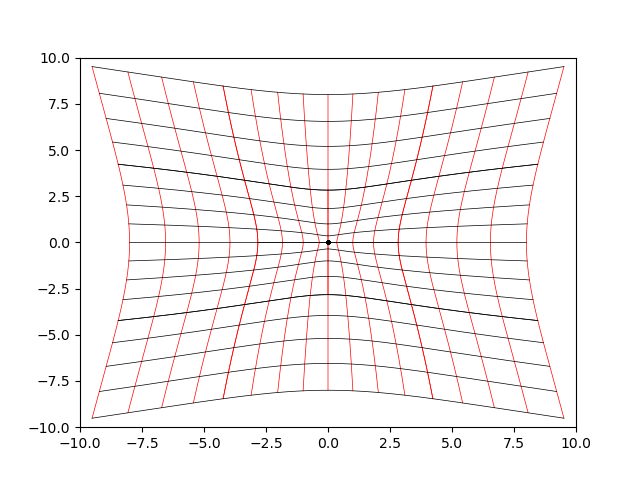}
$\alpha=\frac{3}{2}$
\end{center}
\columnbreak
\begin{center}
\includegraphics[scale=0.4]{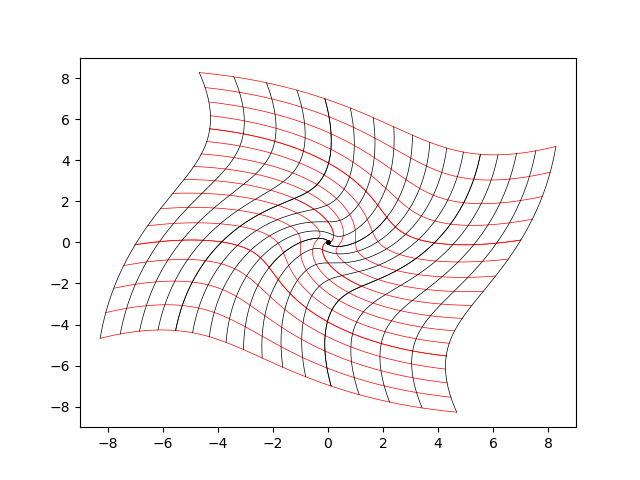}\\
$\alpha=\frac{3}{2}\left(\frac{\sqrt{3}}{2} + \frac{1}{2}i\right)$
\end{center}
\end{multicols}

\begin{multicols}{2}
\begin{center}
\includegraphics[scale=0.4]{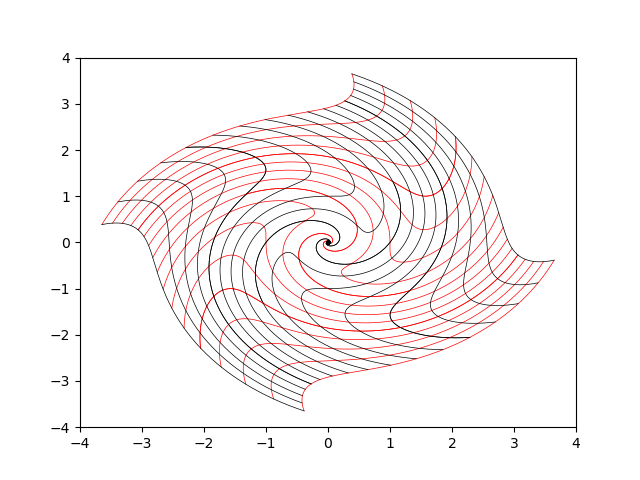}
$\alpha=\frac{3}{2}\left(\frac{1}{2} + \frac{\sqrt{3}}{2}i\right)$
\end{center}
\columnbreak
\begin{center}
\includegraphics[scale=0.4]{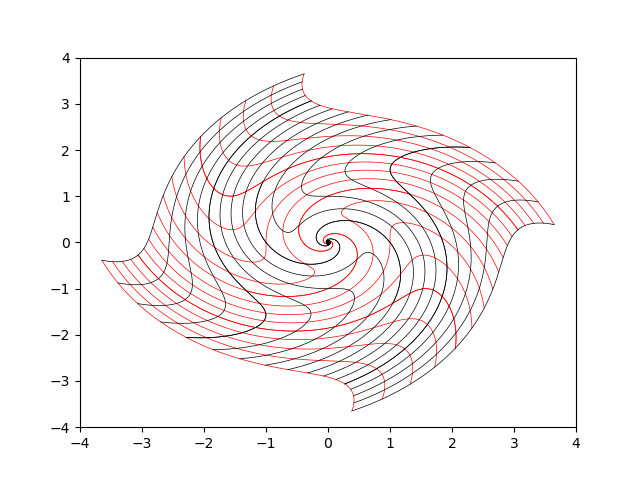}\\
$\alpha=\frac{3}{2}\left(\frac{1}{2} - \frac{\sqrt{3}}{2}i\right)$
\end{center}
\end{multicols}

\begin{center}
\includegraphics[scale=0.4]{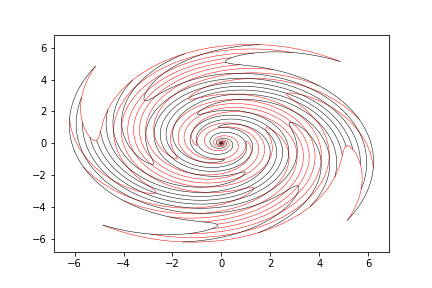}\\
$\alpha= 1+4i$
\end{center}

\begin{multicols}{2}
\begin{center}
\includegraphics[scale=0.4]{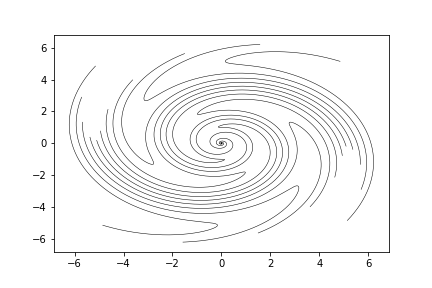}
Horizontal lines alone.
\end{center}
\columnbreak
\begin{center}
\includegraphics[scale=0.4]{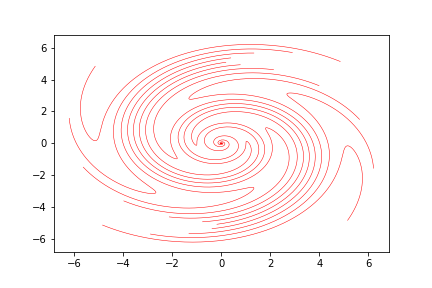}
Vertical lines alone.
\end{center}
\end{multicols}

If we write $\alpha = a + ib$, the figures above are all for the case where $a >0$. The case where $a < 0$ requires some explanation. The general form of the transformation is $z' = r^{a} e^{i(b \ln r + \theta)}$, where $r$ is the modulus of $z$ and $\theta$ is its argument. For $a < 0$, when $r < 1$, then $r^{a} > 1$ and similarly, for $a < 0$, when $r > 1$, then $r^{a} < 1$. 

To illustrate how the complex plane transforms in that case below we first plot the plane for the intervals $-2 \geq \operatorname{Re}(z) \leq 2$ and $-2 \geq \operatorname{Im}(z) \leq 2$ and show how that part of the plane transforms. The transformed plane seems to have a hole inside it. However, this is not the case. To see this, we extend the plane to include $-4 \geq \operatorname{Re}(z) \leq 4$ and $-4 \geq \operatorname{Im}(z) \leq 4$ as and plot this new transformation of the plane. We see that the hole in the middle is being filled in. To completely fill in the hole, we need to extend plane to r at infinity, which is obviously not possible in a numerical experiment such as this.

\begin{multicols}{2}
\begin{center}
\includegraphics[scale=0.43]{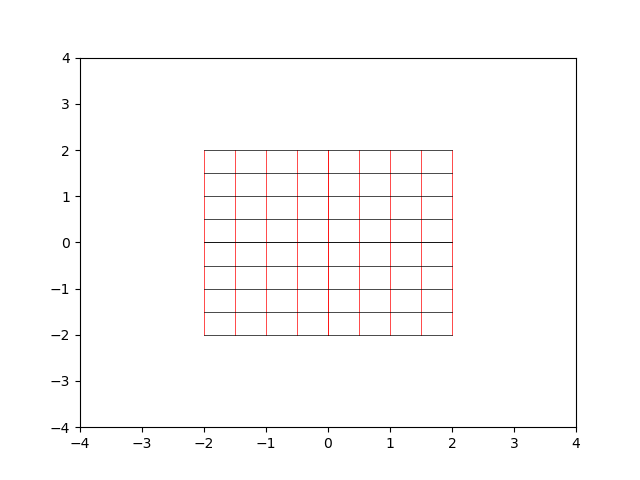}\\
Complex plane for $-2 \geq \operatorname{Re}(z) \leq 2$ and $-2 \geq \operatorname{Im}(z) \leq 2$.
\end{center}
\columnbreak
\begin{center}
\includegraphics[scale=0.43]{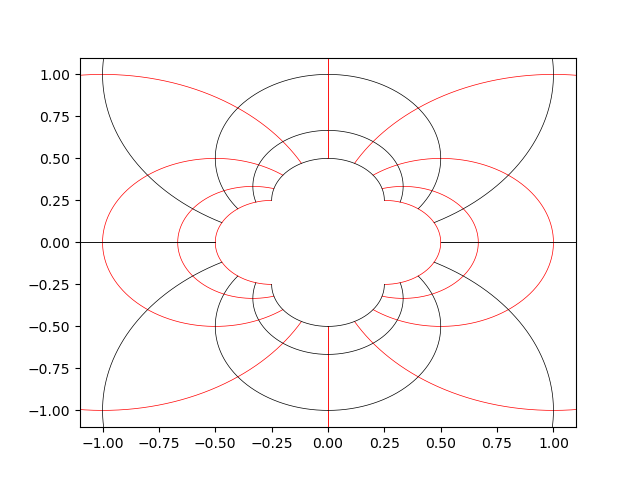}\\
$\alpha = -1$
\end{center}
\end{multicols}

\begin{multicols}{2}
\begin{center}
\includegraphics[scale=0.43]{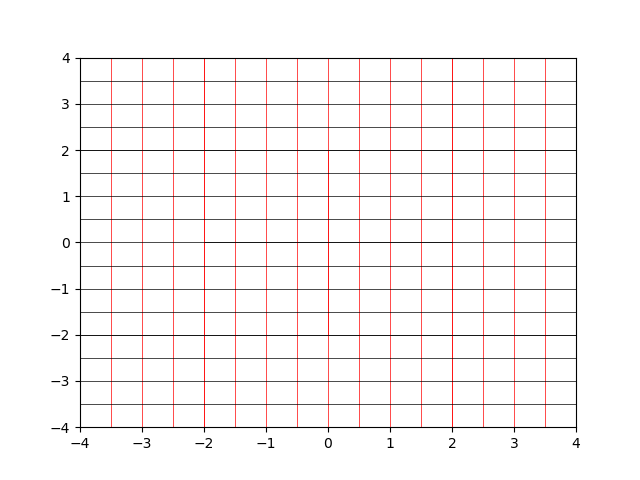}\\
Complex plane for $-4 \geq \operatorname{Re}(z) \leq 4$ and $-4 \geq \operatorname{Im}(z) \leq 4$
\end{center}
\columnbreak
\begin{center}
\includegraphics[scale=0.43]{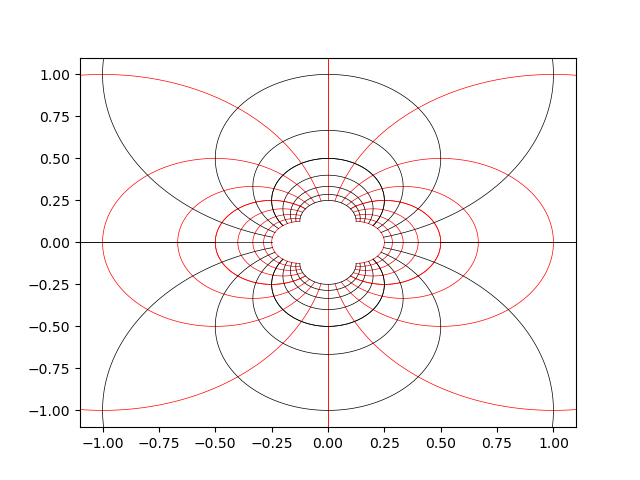}\\
$\alpha = -1$
\end{center}
\end{multicols}
\newpage 

\begin{multicols}{2}
\begin{center}
\includegraphics[scale=0.43]{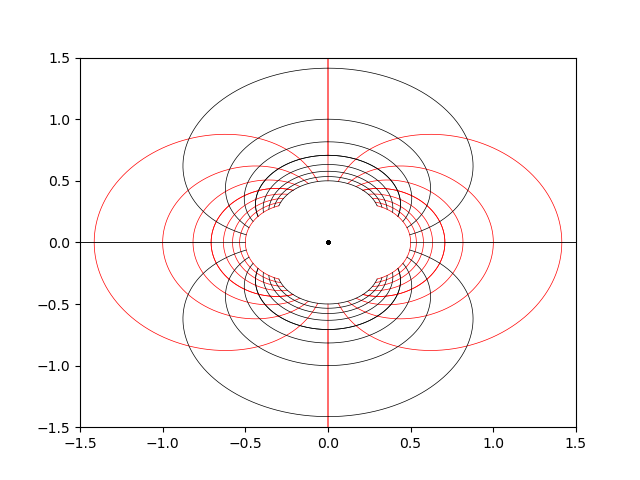}
$\alpha=-\frac{1}{2}$
\end{center}
\columnbreak
\begin{center}
\includegraphics[scale=0.43]{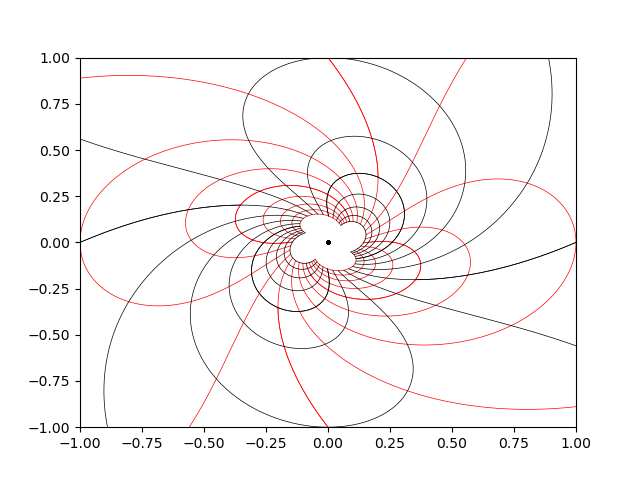}\\
$\alpha=-\frac{1}{2}\left(\frac{\sqrt{3}}{2} + \frac{1}{2}i\right)$
\end{center}
\end{multicols}

\begin{multicols}{2}
\begin{center}
\includegraphics[scale=0.43]{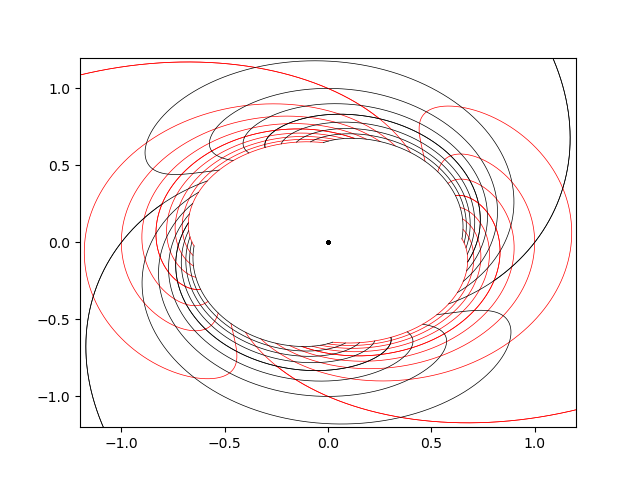}
$\alpha=-\frac{1}{2}\left(\frac{1}{2} + \frac{\sqrt{3}}{2}i\right)$
\end{center}
\columnbreak
\begin{center}
\includegraphics[scale=0.43]{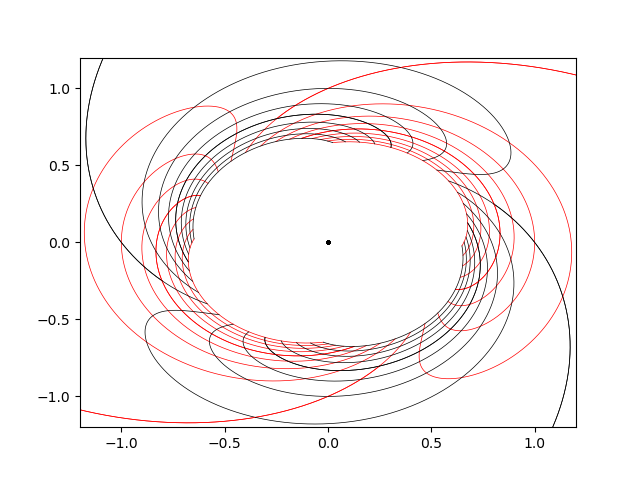}\\
$\alpha=-\frac{1}{2}\left(\frac{1}{2} - \frac{\sqrt{3}}{2}i\right)$
\end{center}
\end{multicols}

\begin{multicols}{2}
\begin{center}
\includegraphics[scale=0.43]{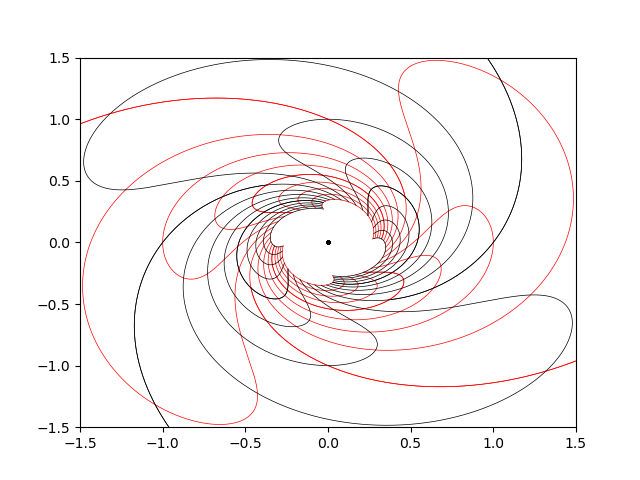}
$\alpha=-\frac{3}{2}\left(\frac{1}{2} + \frac{\sqrt{3}}{2}i\right)$
\end{center}
\columnbreak
\begin{center}
\includegraphics[scale=0.43]{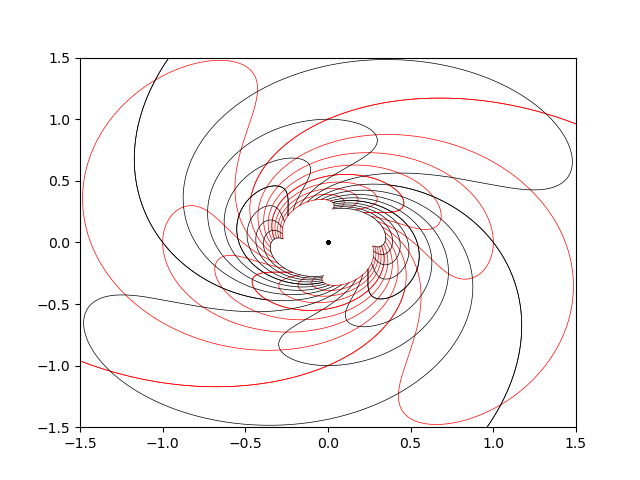}\\
$\alpha=-\frac{3}{2}\left(\frac{1}{2} - \frac{\sqrt{3}}{2}i\right)$
\end{center}
\end{multicols}

\newpage

\mb{}

\end{document}